\definecolor{blue}{rgb}{1.0,0.0,0.9}
\numberwithin{equation}{section}
\theoremstyle{plain}
\newtheorem{theorem}{Theorem}[section]
\newtheorem{lemma}[theorem]{Lemma}
\newtheorem{proposition}[theorem]{Proposition}
\theoremstyle{definition}
\newtheorem{claim}[theorem]{Claim}
\newtheorem*{thm}{Theorem}
\begin{document}

\title[Non-existence of normal elements in the Iwasawa Algebra of Chevalley groups]
{Non-existence of non-trivial normal elements in the Iwasawa Algebra of Chevalley groups}

\author{Dong Han, Jishnu Ray and Feng Wei}

\address{Han: School of Mathematics and Information Science, Henan Polytechnic University, Jiaozuo 454000, P. R. China}

\email{lishe@hpu.edu.cn}

\address{Ray: Department of Mathematics, The University of British Columbia, 1984 Mathematics Road,V6T 1Z2, Vancourer, BC, Canada}

\email{jishnuray@math.ubc.ca} \email{jishnuray1992@gmail.com}

\address{Wei: School of Mathematics and Statistics, Beijing Institute of Technology,
Beijing 100081, P. R. China}

\email{daoshuo@hotmail.com}\email{daoshuo@bit.edu.cn}

\begin{abstract}
For a prime $p>2$,
let $G$ be a semi-simple, simply connected, split Chevalley group over $\mathbb{Z}_p$, 
$G(1)$ be the first congruence kernel of $G$ and $\Omega_{G(1)}$ be the mod-$p$ Iwasawa algebra 
defined over the finite field $\mathbb{F}_p$. Ardakov, Wei, Zhang \cite{ArdakovWeiZhang1} have shown 
that if $p$ is a ``nice prime " ($p \geq 5$ and $p \nmid n+1$ if the Lie algebra of $G(1)$ is of type $A_n$), 
then every non-zero normal element in $\Omega_{G(1)}$  is a unit.  Furthermore, they conjecture 
in their paper that their nice prime condition is superfluous.  
The main goal of this article is to provide an entirely new proof of Ardakov, Wei and Zhang's 
result using explicit presentation of Iwasawa algebra developed by the second author of this 
article and thus eliminating the nice prime condition,  therefore proving their conjecture.

\end{abstract}

\subjclass[2010]{11R23, 22E35, 17B45, 17B22 (Primary), 22E50 (Secondary)}

\keywords{Iwasawa algebra, normal element,  Chevalley group, Lie algebras, $p$-adic Lie groups, Root systems}

\date{\today}
\thanks{The second author would like to thank PIMS-CNRS and the University of British Columbia for postdoctoral research grant. 
He is also thankful to Beijing Institute of Technology for its hospitality during a visit on August 2018 when this collaboration took place.}

\maketitle

\tableofcontents

\section{Introduction}
\label{xxsec1}

Let $p$ be a prime integer, and let $\mathbb{Z}_p$ denote the ring of
$p$-adic integers. A group $G$ is \textit{compact p-adic analytic}
if it is a topological group which has the structure of a $p$-adic
analytic manifold - that is, it has an atlas of open subsets of
$\mathbb{Z}^n_p$ , for some $n\geq 0$. There is a more intrinsic way to
characterize such kinds of groups.  A topological group $G$ is
compact $p$-adic analytic if and only if $G$ is a closed subgroup of
the general linear group ${\rm GL}_n(\mathbb{Z}_p)$ for some $n\geq
1$. The research object of this article is the so-called \textit{Iwasawa algebras} of $G$:
$$
\Lambda_G :=\varprojlim_{N\unlhd G} \mathbb{Z}_p[G/N],
$$
where the inverse limit is taken over the open normal subgroups $N$
of $G$. Modulo $p$, the epimorphic image of $\Lambda_G $ is denoted by 
$\Omega_G =\Lambda_G\bigotimes_{\mathbb{Z}_p}\mathbb{F}_p$ and
$$
\Omega_G : =\varprojlim_{N\unlhd G} \mathbb{F}_p[G/N],
$$
where $\mathbb{F}_p$ is the finite field of $p$ elements. 
For any odd prime $p$, Clozel in his paper \cite{Clozel1} gave explicit presentations for
the afore-mentioned two Iwasawa algebras over
the first congruence subgroup of ${\rm SL}_2(\mathbb{Z}_p)$, which is
$\Gamma_1({\rm SL}_2(\mathbb{Z}_p))={\rm Ker}({\rm SL}_2(\mathbb{Z}_p)\longrightarrow {\rm SL}_2(\mathbb{F}_p))$.
Ray \cite{Ray1, Ray2, Ray3, Ray4}  generalized Clozel's work to the following three cases: 
the first congruence kernel of a semi-simple, simply connected Chevalley group over 
$\mathbb{Z}_p$, general uniform pro-$p$ groups, and pro-$p$ Iwahori subgroups of ${\rm GL}_n(\mathbb{Z}_p)$.

 In this article, we are going 
to look at the \textit{normal elements}   in $\Omega_G$ and   the ideals generated by them. They 
are defined as $r\in \Omega_G$ such that $r\Omega_G=\Omega_G r$. Each normal element $r$ gives rise to a two-sided reflexive ideal $r\Omega_G$. Let us first recall the definition of reflexive ideals. Let $A$ be any algebra and $M$ be a left
$A$-module. We call $M$ \textit{reflexive} if the canonical mapping
$$
M\longrightarrow {\rm Hom}_{A^{\sf op}}({\rm Hom}_{A}(M,A),A)
$$
is an isomorphism. A reflexive right $A$-module is defined
similarly. We will call a two-sided ideal $I$ of $A$ \textit{reflexive}
if it is reflexive as a right and as a left $A$-module. 

Let $\Phi$ be a root system of $G$ having fixed a split maximal torus, so that the Dynkin diagram of any indecomposable component 
of $\Phi$ belongs to
$$
\{A_n(n\geq1), \, B_n(n\geq 2),\, C_n(n\geq 3),\,  D_n(n\geq 4), \, E_6, \, E_7,\,  E_8, \, F_4, \, G_2\}
$$
Let $\Phi(\mathbb{Z}_p)$ denote the $\mathbb{Z}_p$-Lie algebra constructed by using a 
Chevalley basis associated to $\Phi$.

We say that $p$ is a \textit{nice prime} for $\Phi$ if $p\geq 5$ and if $p\nmid n+1$ when $\Phi$ has 
an indecomposable component of type $A_n$. Ardakov, the third author of the current article and 
Zhang have showed that

\begin{theorem}{\rm \cite[Theorem A and Theorem B]{ArdakovWeiZhang1},\cite[Corollary 0.3]{ArdakovWeiZhang2}}\label{xxsec7.2}
Let $G$ be a torsionfree compact $p$-adic analytic group whose $\mathbb{Q}_p$-Lie algebra $\mathcal{L}(G)$ 
is split semisimple over $\mathbb{Q}_p$. Suppose that $p$ is a nice prime for the root system $\Phi$ of $\mathcal{L}(G)$. 
Then the mod-$p$ Iwasawa algebra $\Omega_G$ has no non-trivial two-sided reflexive ideals. In particular,  
every non-zero normal element of $\Omega_G$ is a unit. 
\end{theorem}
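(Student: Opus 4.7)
The plan is to deduce the normal-element statement from the absence of non-trivial reflexive ideals, and then attack the latter via a filtration and invariant-theoretic argument. For the first reduction: if $r \in \Omega_G$ is any non-zero normal element, then $r\Omega_G = \Omega_G r$ is a two-sided ideal, and because $\Omega_G$ is a complete local Auslander-regular ring (a fact established under the nice prime hypothesis), any principal ideal generated by a non-zero-divisor is automatically reflexive. Hence if no non-trivial reflexive ideals exist, one gets $r\Omega_G = \Omega_G$ and $r$ is a unit.

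To rule out non-trivial reflexive ideals $I \subsetneq \Omega_G$, I would first pass to an open uniform pro-$p$ subgroup $H \leq G$ of Chevalley type; since $\Omega_G$ is finitely generated and free as a module over $\Omega_H$ on either side, non-trivial reflexive ideals in $\Omega_G$ contract to non-trivial reflexive ideals in $\Omega_H$, so one may assume $G = H$ is a first congruence kernel. By the Auslander--Gorenstein theory applied to $\Omega_H$, every reflexive ideal is an intersection of reflexive prime ideals of height one, so $I$ may be taken to be a prime reflexive ideal of height one. I would then invoke Ray's explicit presentation of $\Omega_H$ by generators indexed by the roots of $\Phi$ together with torus generators, subject to $p$-adic analogues of Steinberg's relations, in order to obtain concrete commutator control on candidates for elements of $I$.

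Next I equip $\Omega_H$ with its natural $p$-adic (Lazard) filtration, whose associated graded ring $\operatorname{gr}(\Omega_H)$ is a polynomial ring $\mathbb{F}_p[\mathfrak{g}]$ in $\dim H$ variables, where $\mathfrak{g} := \mathcal{L}(H) \otimes_{\mathbb{Z}_p} \mathbb{F}_p$, carrying the natural coadjoint action of $H$. The symbol ideal $\operatorname{gr}(I)$ is then a $H$-stable graded prime ideal, so its vanishing locus is a proper coadjoint-stable closed subvariety of $\mathfrak{g}^{*}$ of codimension one. Under the nice prime hypothesis, $\mathfrak{g}$ remains semisimple over $\mathbb{F}_p$, and the classical invariant theory of the coadjoint representation (Chevalley restriction together with the structure of $\mathfrak{g}$-invariant primes of the symmetric algebra) precludes such a codimension-one $H$-invariant subvariety, contradicting $\operatorname{ht}(I) = 1$.

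The main obstacle I anticipate lies in the modular invariant-theoretic step: transporting the characteristic-zero classification of $\mathfrak{g}$-invariant ideals in $\mathbb{F}_p[\mathfrak{g}]$ to positive characteristic is precisely where the nice prime condition becomes essential, since one must avoid pathological behavior of the Killing form and of the $p$-th power map on $\mathfrak{g}$ (particularly in type $A_n$ when $p \mid n+1$, where the Lie algebra acquires a non-trivial center). A secondary difficulty is making the descent from $\Omega_G$ to the congruence kernel rigorous, as one must track how reflexivity of ideals behaves under finite free extensions of Auslander--Gorenstein rings. The strategy of directly exploiting Ray's explicit generators and relations, announced in the paper's abstract, should sidestep the invariant-theoretic step entirely and thereby remove the nice prime hypothesis.
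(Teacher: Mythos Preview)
The statement you are proving is not given a proof in this paper: Theorem~\ref{xxsec7.2} is simply quoted from \cite{ArdakovWeiZhang1,ArdakovWeiZhang2}. What the present paper actually establishes is Theorem~\ref{xxsec5.1}, which is only the normal-element consequence, only for the first congruence kernel $G(1)$ of a split Chevalley group, and by an entirely different method that avoids reflexive ideals altogether. So strictly speaking there is no ``paper's own proof'' to compare against here; your sketch is closer in spirit to the original Ardakov--Wei--Zhang argument than to anything in this article.

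That said, a few comments on your outline as a route to the cited theorem. Your reduction from normal elements to reflexive ideals is fine. The descent step is not: you assert that a non-trivial reflexive ideal of $\Omega_G$ \emph{contracts} to a non-trivial reflexive ideal of $\Omega_H$ for an open uniform $H$, but contraction does not in general preserve reflexivity, nor non-triviality in the direction you need. The actual reduction in \cite{ArdakovWeiZhang2} goes the other way and is more delicate. Also, you invoke Ray's explicit presentation in the middle of the sketch but then make no use of it; the argument you actually run is the filtration/graded passage to $\mathrm{Sym}(\mathfrak{g})$ and coadjoint invariant theory, which is the Ardakov--Wei--Zhang mechanism and is precisely where the nice-prime hypothesis enters. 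Your final paragraph correctly diagnoses this.

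By contrast, the method of this paper never touches reflexive ideals or the associated graded ring. It works directly inside $\Omega_{G(1)}$ with the Lazard coordinates $y_\alpha,y_\delta$: Proposition~\ref{xxsec3.1} computes the lowest-degree parts of all commutators $[y_\gamma^{p^r},y_\eta^{p^s}]$, Lemma~\ref{xxsec4.1} turns these into ``partial differential'' identities for $[y_\gamma^{p^r},w]_\circ$, and the proof of Theorem~\ref{xxsec5.1} shows by a degree-and-divisibility argument (with a Dynkin-diagram chase in Section~\ref{xxsec6}) that a normal element with no constant term cannot exist. This yields only the normal-element statement, not the stronger reflexive-ideal statement of Theorem~\ref{xxsec7.2}, but it does so for every prime $p\ge 3$.
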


Ardakov, the third author of the current article and Zhang conjecture (see paragraph before 
section 0.4 of \cite{ArdakovWeiZhang2}) that the nice prime condition in their theorem above is superfluous. 
Now, for the rest of this paper, we assume that $G$ is a semi-simple, simply connected, split 
Chevalley group over  $\mathbb{Z}_p$, and that $G(1)$ is the first congruence kernel defined by
$$G(1):={ \rm Ker}(G(\mathbb{Z}_p)\longrightarrow G(\mathbb{Z}_p/p\mathbb{Z}_p)).$$
In this paper we prove that ``nice prime" condition in indeed superfluous for $G(1)$ and thereby confirming to Ardakov, 
the third author of the current article and Zhang's conjecture. Furthermore, our method of proof is completely 
different from that of Ardakov and in based on an explicit presentation of Iwasawa algebras and works for any prime $p>2$. 
The main theorem of this article, extending earlier works of \cite{HanWei1, HanWei2, WeiBian1, WeiBian2} for 
${\rm SL}_2(\mathbb{Z}_p)$, ${\rm SL}_3(\mathbb{Z}_p)$ and ${\rm SL}_n(\mathbb{Z}_p)$, is the following.

\begin{thm}(Theorem \ref{xxsec5.1}) 
Let $p$ be a prime with $p\geq 3$ and $G$ be a semi-simple, simply connected, split Chevalley group 
over $\mathbb{Z}_p$. Suppose that $G$ is one of the following Chevalley groups of Lie type: 
$A_{\ell}\, (\ell\geq 1), B_{\ell}\,(\ell\geq 2), C_{\ell}\,(\ell\geq 2), D_{\ell}\,(\ell\geq 3), E_6, E_7, E_8, F_4, G_2$.
Then for any nonzero element $W\in \Omega_{G(1)}$, $W$ is a normal element if and only if 
$W$, as a noncommuttaive formal power series, contains constant terms. In this case, $W$ is a unit. 
\end{thm}
A direct consequence of this result is reproving that the center of $\Omega_{G(1)}$ is trivial (again originally done by Ardakov in \cite{Ardakov}.) This is carried out in Proposition \ref{xxsec7.1}.

It should be remarked that we need $p>2$, otherwise $G(1)$ is not torsion free and 
$\Omega_{G(1)}$ is  not an integral domain and our proof is heavily based on Lazard's theory of $p$-valued group and Iwasawa algebras where we need our Iwasawa algebras to be an integral domain.

The roadmap of this article is as follows. After Introduction, we recall the  basic 
notions of Lazard basis for $p$-adic analytic groups in Section \ref{xxsec2}. 
Section \ref{xxsec3} is contributed to the computations of the lowest degree terms of the 
commutators between the generators of $\Omega_{G(1)}$\,(see Proposition \ref{xxsec3.1}). 
In Section \ref{xxsec4}, we calculate the partial differential  equations (cf. Lemma \ref{xxsec4.1}). 
The proof of our main result (Theorem \ref{xxsec5.1}) is given in Section \ref{xxsec5}.
While proving our main theorem, we need an important claim (see Claim \ref{xxsec5.3}) 
which we prove later in Section \ref{xxsec6} using Dynkin diagrams and the partial differential 
equations.  Section \ref{xxsec7} contains applications to center reproving Ardakov's result. Future questions are discussed in Section \ref{applications nonuniform}.

\section{Basic Setup on Lazard Ordered Basis}
\label{xxsec2}

Let $G$ be a semi-simple, simply connected, split Chevalley group over $\mathbb{Z}_p$, $\Phi$ be 
the root system with respect to a maximal torus, $\Pi$ be a  set of simple roots. We can view $G$ as a 
group scheme over $\mathbb{Z}$\,(cf. \cite[XXV]{GillePolo}) and the congruence kernels can be defined as
$$G(k):={ \rm ker}(G(\mathbb{Z}_p)\longrightarrow G(\mathbb{Z}_p/p^k\mathbb{Z}_p)).$$
Let 
$$
\{H_{\delta_1},\cdots, H_{\delta_{\ell}},X_{\alpha}, \alpha\in\Phi,\delta_1,\cdots,\delta_{\ell}\in\Pi\}
$$ 
be a Chevalley basis in the Lie algebra Lie$(G)$ of $G$\,(cf. \cite[Page 6]{Steinberg}). Let $x_{\alpha}(t)=\exp\, tX_{\alpha}$, 
which is the one dimensional unipotent subgroup in the $\mathbb{Q}_p$ points of $G$ if $t\in\mathbb{Q}_p$. For $t\in \mathbb{Q}_p^*$, 
we define
$$h_{\alpha}(t):=\omega_{\alpha}(t){\omega_{\alpha}(1)}^{-1},$$
where
$\omega_{\alpha}(t):=x_{\alpha}(t)x_{-\alpha}(-t^{-1})x_{\alpha}(t)$\,(cf. \cite[Lemma 19 of Section 3]{Steinberg}). 
By invoking \cite[Page 171]{Schneider}, we know that the function
\begin{align*}
\omega:\ G(1)& \longrightarrow R_+^*\cup {\infty},\\
x & \longmapsto k \ \text{for}\  x\in G(k)\backslash G(k+1)
\end{align*}
is a $p$-valuation on $G(1)$ in the sense of Lazard \cite[III, 2.1.2]{Lazard}. Here, by convention, 
we put $\omega(1)=\infty$. We recall, from \cite[Theorem 2.2]{Ray1}, that the
elements
\begin{equation}\{x_{\beta}(p),h_{\delta}(1+p), x_{\alpha}(p), \beta\in\Phi^-, \delta\in\Pi, \alpha\in\Phi^+ \}\end{equation}
form a Lazard ordered basis\,(cf.  \cite[III, 2.2.4]{{Lazard}}) for ($G(1), \omega$). The ordering in (2.1) is given 
by a fixed order on the roots such that the height function on the roots increases. Also, in (2.1), 
$\Phi^-$ (resp. $\Phi^+$) denotes the negative (resp. positive) roots in $\Phi$. Moreover, since 
$\omega(x_{\beta}(p))=\omega(x_{\alpha}(p))=\omega(h_{\delta}(1+p))$ and $p$ is assumed to 
be $>2$, $G(1)$ is also $p$-saturated with respect to its $p$-valuation \,(cf.  \cite[III, 2.2.7.1]{{Lazard}}). 
Furthermore, we would like to point out that $G(1)$  is a uniform pro-$p$ group in the sense of 
\cite{DixonduSautoyMannSegal}. A nice exposition can be found in
Sections 2, 3 and 4 of the Arxiv version\,(\cite[Arxiv]{{Ray1}})  of  \cite{Ray1}.

Now, let $g_1,\cdots, g_d\, (d=|\Phi|+|\Pi|)$ be the ordered basis as in (2.1). Taking into account 
the definition of Lazard ordered basis, we have a homomorphism
\begin{align*}
c: \mathbb{Z}_p^d & \longrightarrow G(1),\\
(y_1,\cdots,y_d)& \longmapsto g_1^{y_1}\cdots g_d^{y_d}.
\end{align*}
Let $C(G(1))$ be the set of continuous functions from $G(1)$ to $\mathbb{Z}_p$. The mapping $c$ induces, 
by pulling back functions, an isomorphism of $\mathbb{Z}_p$-modules
$$c^*: C(G(1))\simeq C(\mathbb{Z}_p^d).$$
Dualizing this isomorphism, we obtain
$$
\begin{aligned}
\Lambda_{G(1)}={\rm Hom}_{\mathbb{Z}_p}(C(G(1)), \mathbb{Z}_p) &\simeq \mathbb{Z}_p[[y_{\alpha}, y_{\delta}, \alpha\in\Phi, \delta\in\Pi]],\\
x_{\alpha}(p)-1 & \longmapsto y_{\alpha},\\
h_{\delta}(1+p)-1 & \longmapsto y_{\delta}.
\end{aligned}
$$
Identifying $y_{\alpha}$ with $x_{\alpha}(p)-1$ and $y_{\delta}$ with $h_{\delta}(1+p)-1$, we observe
that any element in $\Lambda_{G(1)}$\,(and $\Omega_{G(1)}$) can be written as a uniquely 
determined power series in $y_{\alpha}$'s and $y_{\delta}$'s with coefficients in 
$\mathbb{Z}_p$\,(resp. $\mathbb{F}_p$). Note that the ordering of $y_{\alpha}$ and $y_{\delta}$
are according to increasing height function on the roots as in the case of Lazard ordered 
basis (cf. $ y_{\alpha}= x_{\alpha}(p)-1$, $y_{\delta}=h_{\delta}(1+p)-1$, see
equation (2.1) and the discussion following it). We refer the reader to 
\cite{Schneider} for a nice introduction to Iwasawa algebras and Lazard ordered basis.

\section{Calculation of the Lowest Degree Terms of Commutators} 
\label{xxsec3}

Let $ y_{\alpha}= x_{\alpha}(p)-1$ and $y_{\delta}=h_{\delta}(1+p)-1$. In this section 
we are going to find out the lowest degree terms of all commutator relation between $y_{\alpha}$
and $y_{\delta}$ in $\Omega_{G(1)}$. The strategy is to use firstly Steinberg's Chevalley 
relation in $G(1)$ following \cite{Steinberg}\,(which gave us the explicit relations for the 
Iwasawa algebra in \cite[Lemma 3.1]{Ray1}) and then determine the commutators of 
elements $y_{\alpha}$'s and $y_{\delta}$'s. Furthermore  we determine the lowest degree
terms of the commutators (cf. Proposition \ref{xxsec3.1}).

Throughout this section $\alpha\in\Phi$, $\delta\in\Pi$, $\Pi=\{\delta_1,\cdots,\delta_{\ell}\}$, $\ell=|\Pi|$, 
$ y_{\alpha}= x_{\alpha}(p)-1$ and $y_{\delta}=h_{\delta}(1+p)-1$. We use $[-,-]$ to 
denote the commutators of elements $y_{\alpha}$'s and $y_{\delta}$'s and use $[-,-]_\circ$ to denote 
the lowest degree terms of the commutators $[-, -]$.  We recall that we are working in the mod-$p$ 
Iwasawa algebra $\Omega_{G(1)}$.

\begin{proposition} \label{xxsec3.1}
Let $r, s$ be non-negative integers and $p$ be a prime integer with $p>5$.  Then we have
\begin{enumerate}
\item[(1) ]  $[y^{p^r}_{\alpha_1}, y^{p^s}_{\alpha_2}]_{\circ}  = 0$, whenever $\alpha_1, \alpha_2\in\Phi$, $\alpha_1\neq -\alpha_2$, $\alpha_1+ \alpha_2\notin   \Phi$,
\item[(2) ]  $ [y^{p^r}_{\alpha_1}, y^{p^s}_{\alpha_2}]_{\circ}  = c_{11}y^{p^{r+s+1}}_{\alpha_1+\alpha_2}$, whenever $\alpha_1+ \alpha_2\in   \Phi$, $\alpha_1\neq -\alpha_2$,
\item[(3) ]  $ [y^{p^r}_{\alpha}, y^{p^s}_{\delta}]_{\circ}  = -\langle\alpha,\delta \rangle y^{p^{r+s+1}}_{\alpha}$,
\item[(4) ]  $ [y^{p^r}_{\alpha}, y^{p^s}_{-\alpha}]_{\circ}  = -\sum_{i=1}^{\ell}n_iy^{r+s+1}_{\delta_{i}}$,
 \end{enumerate}
where $c_{11}\in\{\pm1,\pm2,\pm3\}$, 
$\langle\alpha,\delta \rangle=\frac{2(\alpha\,|\, \delta)}{(\delta\,|\, \delta)}\in \mathbb{Z}$ is 
the so-called Cartan integer\,{\rm (}cf. {\rm \cite[Page 2]{Steinberg}}{\rm )}. The $n_i$'s in $(4)$ are such that 
$\alpha=\sum_{i=1}^{\ell}n_i\delta_i$, $\delta_i\in\Pi$. By the discussion in {\rm \cite[Page 5]{Steinberg}}, 
we know that $\langle\alpha,\delta \rangle\in$ $\{\pm1, 0,\pm1,\pm2\}$.
\end{proposition}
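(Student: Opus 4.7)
The plan is to compute each of the four commutators by translating Steinberg's Chevalley relations in $G(1)$ into ring identities in $\Omega_{G(1)}$ and then extracting the leading (lowest-degree) monomial. The key bootstrap that reduces the general $(r,s)$-case to the essentially group-theoretic content is a characteristic-$p$ identity: since $\Omega_{G(1)}$ has characteristic $p$, the Freshman's dream gives $(1+y_\alpha)^p = 1+y_\alpha^p$, and combined with $x_\alpha(p)^p = x_\alpha(p^2)$ iteration yields
$$
y_\alpha^{p^r} \;=\; x_\alpha(p^{r+1}) - 1, \qquad y_\delta^{p^s} \;=\; h_\delta\bigl((1+p)^{p^s}\bigr) - 1.
$$
Thus each commutator $[y_\bullet^{p^r}, y_\bullet^{p^s}]$ is literally the ring-theoretic difference of two products of explicit group elements, which one rewrites using $gh = (ghg^{-1}h^{-1})\cdot hg$ and then expands as a power series in the generators $y_\alpha, y_\delta$.

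For parts (1) and (2) I would invoke Steinberg's Chevalley commutator formula: for $\alpha_1 + \alpha_2 \neq 0$,
$$
x_{\alpha_1}(t)\,x_{\alpha_2}(u)\,x_{\alpha_1}(t)^{-1} x_{\alpha_2}(u)^{-1} \;=\; \prod_{\substack{i,j\geq 1\\ i\alpha_1 + j\alpha_2\in\Phi}} x_{i\alpha_1 + j\alpha_2}\bigl(C_{ij}\,t^i u^j\bigr),
$$
with structure constants $C_{ij}\in\{\pm 1,\pm 2,\pm 3\}$ taken in a fixed order on roots. Substituting $t=p^{r+1}$, $u=p^{s+1}$ and using the elementary identity $x_\gamma(Cp^k)-1 = (1+y_\gamma^{p^{k-1}})^C - 1 = C\,y_\gamma^{p^{k-1}} + O(y_\gamma^{2p^{k-1}})$, part (2) falls out: the $(i,j)=(1,1)$ term contributes the unique lowest-degree monomial $c_{11}\,y_{\alpha_1+\alpha_2}^{p^{r+s+1}}$ with $c_{11}=C_{11}$, and every other $(i,j)$ with $i+j\geq 3$ produces strictly higher-order contributions. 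Part (1) is the subcase in which the $(1,1)$ slot is empty (because $\alpha_1+\alpha_2\notin\Phi$), so at the canonical degree $p^{r+s+1}$ nothing is contributed and the leading piece vanishes.

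For part (3), Steinberg's torus relation $h_\delta(a)\, x_\alpha(t)\, h_\delta(a)^{-1} = x_\alpha\bigl(a^{\langle\alpha,\delta\rangle} t\bigr)$, applied with $a=(1+p)^{p^s}$ and $t=p^{r+1}$, gives after writing $(1+p)^{p^s} = 1 + p^{s+1}u_s$ with $u_s\in 1+p\mathbb{Z}_p$ and expanding $a^{\langle\alpha,\delta\rangle}$ to first order in $p^{s+1}$ the identity
$$
h_\delta(a)\, x_\alpha(p^{r+1})\, h_\delta(a)^{-1} \;=\; x_\alpha(p^{r+1})\cdot x_\alpha\bigl(\langle\alpha,\delta\rangle\, p^{r+s+2} u_s + O(p^{r+2s+3})\bigr).
$$
Converting the extra factor to $y$-generators, reducing $u_s\equiv 1 \pmod p$, and tracking the sign that appears in $y_\alpha^{p^r} y_\delta^{p^s} - y_\delta^{p^s} y_\alpha^{p^r}$ then gives the claimed leading term $-\langle\alpha,\delta\rangle\, y_\alpha^{p^{r+s+1}}$.

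Part (4), the case $\alpha_2 = -\alpha_1$, will be the main technical obstacle. Chevalley's commutator formula does not apply here; instead one exploits Steinberg's identities $\omega_\alpha(t) = x_\alpha(t) x_{-\alpha}(-t^{-1}) x_\alpha(t)$ and $h_\alpha(s) = \omega_\alpha(s)\omega_\alpha(1)^{-1}$, which together imply a relation of the form $x_\alpha(t)\, x_{-\alpha}(u) = x_{-\alpha}\bigl(u(1+tu)^{-1}\bigr)\, h_\alpha(1+tu)\, x_\alpha\bigl(t(1+tu)^{-1}\bigr)$ whenever $1+tu$ is a unit. Substituting $t=p^{r+1}$, $u=p^{s+1}$ puts $1+tu\in 1+p^{r+s+2}\mathbb{Z}_p^{*}$, and the $h_\alpha$-factor carries the leading term. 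The delicate step, which will occupy the bulk of the proof, is to expand $h_\alpha(1+p^{r+s+2}v)-1$ in terms of the generators $y_{\delta_i}$: using the cocharacter identity that expresses $h_\alpha$ as a product $\prod_i h_{\delta_i}^{n_i}$ (with $\alpha = \sum n_i \delta_i$), combined with the first-order expansion $h_{\delta_i}(1+p^{r+s+2}v) - 1 \equiv v\, y_{\delta_i}^{p^{r+s+1}}$ modulo higher-order terms in $\Omega_{G(1)}$, one assembles all contributions, uses $v \equiv 1 \pmod p$, and carefully tracks the overall sign inherited from the $x_{-\alpha}(-t^{-1})$ factor and from the ring commutator $y_\alpha^{p^r} y_{-\alpha}^{p^s} - y_{-\alpha}^{p^s} y_\alpha^{p^r}$. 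The bookkeeping between the multiplicative torus expansion and the additive lowest-degree term in $\Omega_{G(1)}$ is the chief technical difficulty, but the outcome is precisely $-\sum_{i=1}^\ell n_i\, y_{\delta_i}^{p^{r+s+1}}$.
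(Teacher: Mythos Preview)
Your approach is essentially the paper's: Steinberg's commutator and torus relations for (1)--(3), and for (4) the $\mathrm{SL}_2$-type decomposition $x_\alpha(t)x_{-\alpha}(u)=x_{-\alpha}(\cdot)\,h_\alpha(1+tu)\,x_\alpha(\cdot)$ (the paper carries this out with explicit $2\times 2$ matrices and then transports through the homomorphism $\mathrm{SL}_2(\mathbb{Z}_p)\to\langle\mathfrak{X}_\alpha,\mathfrak{X}_{-\alpha}\rangle$, but the content is identical).

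There is one substantive omission: you never identify where the hypothesis $p>5$ is used. It enters only in part (4). After writing $h_\alpha(s)=\prod_i h_{\delta_i}(s^{n_i})$ and expanding in $\Omega_{G(1)}$, the torus contribution has leading piece $\sum_i(n_i\bmod p)\,y_{\delta_i}^{p^{r+s+1}}$; to obtain the claimed formula with the unreduced integers $n_i$ (and in particular to ensure this sum is nonzero and hence genuinely the lowest-degree term, beating the unipotent contributions of degree $p^{2r+s+2}$ and $p^{r+2s+2}$), you need $n_i<p$ for every $i$. The paper handles this by invoking the Bourbaki tables for the highest root in each type, finding $\max_i n_i^{\max}\leq 6$ with equality in type $E_8$, so $p>5$ suffices. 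This is precisely the ``bookkeeping'' you flagged as the chief difficulty but did not actually resolve.

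A smaller point: in (1) you argue only that the $(1,1)$ slot is empty, which shows nothing appears in degree $p^{r+s+1}$; but $[\,\cdot\,,\cdot\,]_\circ=0$ here means the full commutator vanishes, so you need that \emph{no} $i\alpha_1+j\alpha_2$ with $i,j\geq 1$ is a root once $\alpha_1+\alpha_2\notin\Phi$ and $\alpha_1\neq-\alpha_2$. The paper simply cites Steinberg for the fact that $x_{\alpha_1}$ and $x_{\alpha_2}$ then commute.
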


\begin{proof}
If $\alpha_1, \alpha_2\in\Phi$, $\alpha_1\neq -\alpha_2$, $\alpha_1+ \alpha_2\notin   \Phi$, 
then Example (a) in  \cite[Page 24]{Steinberg} gives
$$
x_{\alpha_1}(p^{r+1})x_{\alpha_2}(p^{s+1})=x_{\alpha_2}(p^{s+1})x_{\alpha_1}(p^{r+1}).
$$
Note that $y^{p^r}_{\alpha_1}=x_{\alpha_1}(p^{r+1})-1$\, (we are working in $ \Omega_{G(1)}$, which is defined over $\mathbb{F}_p$), 
we therefore get
$$ [y^{p^r}_{\alpha_1}, y^{p^s}_{\alpha_2}]= [y^{p^r}_{\alpha_1}, y^{p^s}_{\alpha_2}]_{\circ}=0,
$$
which is the desired assertion (1).

If $\alpha_1+ \alpha_2\in   \Phi$ and  $\alpha_1\neq -\alpha_2$, then applying the relation R2 of \cite[Page 30]{Steinberg} yields
$$x^{p^{r+1}}_{\alpha_1}x^{p^{s+1}}_{\alpha_2}=\prod_{i,j>0}x_{i\alpha_1+j\alpha_2}(c_{ij}p^{i(r+1)+j(s+1)})
\times x_{\alpha_2}(p^{s+1})x_{\alpha_1}(p^{r+1}).
$$
By Page 64 of \cite[Expos\'e  XXIII]{GillePolo}, we see that $c_{ij}\in\{\pm 1, \pm2, \pm3\}$. 
Thus we arrive at
\begin{align*}
(1+y^{p^r}_{\alpha_1})(1+y^{p^s}_{\alpha_2})=&(1+y^{p^{r+s+1}}_{\alpha_1+\alpha_2})^{c_{11}}\prod_{i,j>0, (i,j)\neq(1,1)}(1+y_{i\alpha_1+j\alpha_2}^{p^{i(r+1)+j(s+1)-1}})^{c_{ij}}\\
&\times(1+y^{p^s}_{\alpha_2})(1+y^{p^r}_{\alpha_1}).
\end{align*}
And hence $[y^{p^r}_{\alpha_1}, y^{p^s}_{\alpha_2}]_{\circ} =c_{11}y^{p^{r+s+1}}_{\alpha_1+\alpha_2}$, $c_{11}\in\{\pm1,\pm2,\pm3\}$. 
This is the assertion (2). Note that assertion (2) also holds for $p=5$. The same proof works.

Now let $\alpha\in\Phi$, $\delta\in\Pi$ as before. Then
\begin{align*} [y^{p^r}_{\alpha}, y^{p^s}_{\delta}]=& x_{\alpha}(p^{r+1})[1- x_{\alpha}(-p^{r+1})h_{\delta}((1+p)^{p^s}) x_{\alpha}(p^{r+1})h_{\delta}((1+p)^{-p^s})]\\ &\times h_{\delta}((1+p)^{p^s}).
\end{align*}
By the relation R8 \cite[Page 30]{Steinberg},
we get
$$
x_{\alpha}((1+p)^{p^s\langle \alpha, \delta \rangle}p^{r+1})h_{\delta}((1+p)^{p^s})=h_{\delta}((1+p)^{p^s})x_{\alpha}(p^{r+1}).
$$
And hence
$$x_{\alpha}((1+p)^{p^s\langle \alpha, \delta \rangle}p^{r+1})=h_{\delta}((1+p)^{p^s})x_{\alpha}(p^{r+1})h_{\delta}((1+p)^{-p^s}).$$
So
\begin{align*}&x_{\alpha}(-p^{r+1})x_{\alpha}((1+p)^{p^s\langle \alpha, \delta \rangle}p^{r+1})=x_{\alpha}(-p^{r+1})h_{\delta}((1+p)^{p^s})x_{\alpha}(p^{r+1})h_{\delta}((1+p)^{-p^s}).\end{align*}
By invoking the relation R1 in \cite[Page 30]{Steinberg}, we see that
\begin{align*}&x_{\alpha}(p^{r+1}(1+p)^{p^s\langle \alpha, \delta \rangle}-p^{r+1})=x_{\alpha}(-p^{r+1})h_{\delta}((1+p)^{p^s})x_{\alpha}(p^{r+1})h_{\delta}((1+p)^{-p^s}).\end{align*}
This implies that
 $$
 [y^{p^r}_{\alpha}, y^{p^s}_{\delta}]=(1+y_{\alpha})^{p^r}[1-(1+y_{\alpha})^{p^r(1+p)^{p^s\langle \alpha, \delta \rangle}-p^r}](1+y_{\delta})^{p^r}.
 $$
We therefore have
$$ 
[y^{p^r}_{\alpha}, y^{p^s}_{\delta}]_{\circ}  = -\langle\alpha,\delta \rangle y^{p^{r+s+1}}_{\alpha}.
$$
This proves the assertion (3).

For the Chevalley group ${\rm SL}_2(\mathbb{Z}_p)$, we by \cite[Page 1025]{WeiBian1} know that
\begin{align*}
&\left [
\begin{array}{cc}
1 & -p^{r+1}\\
0 & 1
\end{array}\right ]\left [
\begin{array}{cc}
1 & 0\\
p^{s+1} & 1
\end{array}\right ] \left [
\begin{array}{cc}
1 & p^{r+1}\\
0 & 1
\end{array}\right ] \left [
\begin{array}{cc}
1 & 0\\
-p^{s+1} & 1
\end{array}\right ]
\notag\\=&\left [
\begin{array}{cc}
1  & -p^{2r+s+3}(1+p^{r+s+2})^{-1}\\
0 & 1
\end{array}\right ]
\left [
\begin{array}{cc}
(1+p^{r+s+2})^{-1} & 0\\
0 & 1+p^{r+s+2}
\end{array}\right ]\\&\times\left [
\begin{array}{cc}
1  &0\\
-p^{r+2s+3}(1+p^{r+s+2})^{-1} & 1
\end{array}\right ].
\end{align*}
Recall that we have a homomorphism\,(cf. \cite[Corollary 6 of Page 46]{Steinberg})
$$
{\rm SL}_2(\mathbb{Z}_p)\longrightarrow\langle \mathfrak{X}_{\alpha}, \mathfrak{X}_{-\alpha}\rangle
$$
$$
\left [
\begin{array}{cc}
1  & t\\
0 & 1
\end{array}\right ]\longmapsto x_{\alpha}(t),\ \ \left [
\begin{array}{cc}
1  & 0\\
t& 1
\end{array}\right ]\longmapsto x_{-\alpha}(t),\ \ \left [
\begin{array}{cc}
 t &0\\
0 & t^{-1}
\end{array}\right ]\longmapsto h_{\alpha}(t).
$$
So we conclude that
\begin{align*}
&x_{\alpha}(-p^{r+1})x_{-\alpha}(p^{s+1})x_{\alpha}(p^{r+1})x_{-\alpha}(-p^{s+1})\\=&
x_{\alpha}(-p^{2r+s+3}(1+p^{r+s+2})^{-1})h_{\alpha}((1+p)^{\beta})x_{-\alpha}(-p^{r+2s+3}(1+p^{r+s+2})^{-1}).
\end{align*}
One should note that  $$(1+p^{r+s+2})^{-1}=1-p^{r+s+2}+p^{2(r+s+2)}-p^{3(r+s+2)}+p^{4(r+s+2)}+\cdots.$$
It follows from the properties of $p$-adic integers that there exists one element $\beta$ such that $$(1+p^{r+s+2})^{-1}=(1+p)^{\beta},$$
where $\beta = \beta_0 + \beta_1p + \beta_2p^2 + \cdots+ \beta_{r+s}p^{r+s}+ \beta_{r+s+1}p^{r+s+1}+ \cdots $, $\beta_k\in \mathbb{Z} $ and
$0 \leq \beta_k\leq (p - 1)$. According to the expansion formula of $(  1 +p^{r+s+2})^{-1}$, we can compute all $\beta_k$. For instance,
$ \beta_0 = \beta_1=\cdots = \beta_{r+s}=0$, $\beta_{r+s+1}=p-1$, $\beta_{r+s+2}=(\frac{p^{r+s+1}-1}{2}-1)\mod p$. Now,
\begin{align*}
[y^{p^r}_{\alpha}, y^{p^s}_{-\alpha}]=&x_{\alpha}(p^{r+1})[1-x_{\alpha}(-p^{r+1})x_{-\alpha}(p^{s+1})x_{\alpha}(p^{r+1})x_{-\alpha}(-p^{s+1})]x_{\alpha}(p^{s+1})\\
=&x_{\alpha}(p^{r+1})[1-x_{\alpha}(-p^{2r+s+3}(1+p^{r+s+2})^{-1})h_{\alpha}((1+p)^{\beta})\\&\times x_{-\alpha}(-p^{r+2s+3}(1+p^{r+s+2})^{-1})]x_{\alpha}(p^{s+1}).
\end{align*}
Since $G$ is simply connected, by the proof of \cite[Corollary 5 in Page 44]{Steinberg}\, 
(see also \cite[Corollary of Lemma 28 in Page 44]{Steinberg}), 
we know that there exist integers $n_i$ such that
$$
h_{\alpha}((1+p)^{\beta})=\prod_{\delta_i\in\Pi}h_{\delta_i}((1+p)^{n_i\beta})\,\,(\alpha=\sum_{i=1}^{\ell} n_i\delta_i, \   \delta_i\in\Pi, \  \ell=|\Pi|)
$$
uniquely. Thus we get
\begin{align}
\label{eqnumber}
[y^{p^r}_{\alpha}, y^{p^s}_{-\alpha}]=&(1+y_{\alpha})^{p^r}[1-{(1+y_{\alpha}^{p^{2r+s+2}})}^{-(1+p^{r+s+2})^{-1}}\prod_{\delta_i\in\Pi}(1+y_{\delta_i}^{n_i\beta})
{(1+y_{-\alpha}^{p^{r+2s+2}})}^{-(1+p^{r+s+2})^{-1}}](1+y_{-\alpha})^{p^s}.\end{align}
In order to determine  the lowest degree term of $[y^{p^r}_{\alpha}, y^{p^s}_{-\alpha}]$, our next 
step is to find out how large the $n_i$'s can be. It should be remarked that the highest root\,
(the root in $\Phi$ with maximum height) denoted by $\alpha_{\max}$ has the maximum $n_i$'s\, 
(cf. \cite[Proposition 25 in Page 178]{Bourbaki}). That is, let us set $\alpha_{\max}=\underset{\delta_i\in \Pi}{\sum} n_i^{\max} \delta_i$
and let $\alpha$ be any root with $\alpha=\sum_{\delta_i\in\Pi}n_i \delta_i$. Then $n_i\leq n_i^{\max}$, $  i\in[1,\ell]$, $\ell=|\Pi|$. 
Now, one can check, case by case, for type $A_{\ell}$, $B_{\ell}$,
$C_{\ell}$, $D_{\ell}$ and exceptional Lie type algebras these $n_i^{\max}$ from Bourbaki table (cf. \cite[Chapter VI, Section 4]{Bourbaki}).
\begin{enumerate}
\item[(1)] Type $A_{\ell}$: $\underset{i}{\rm Max}\{n_i^{\max} \}=1$,
\item[(2)] Type $B_{\ell}\,(\ell\geq 2)$: $\underset{i}{\rm Max}\{n_i^{\max} \}=2$\,\,(Page 214 of \cite[Chapter IV, Section 5]{Bourbaki}),
\item[(3)] Type $C_{\ell}\,(\ell\geq 2)$: $\underset{i}{\rm Max}\{n_i^{\max} \}=2$\,\,(Page 216 of \cite{Bourbaki}),
\item[(4)] Type $D_{\ell}\,(\ell\geq 3)$: $\underset{i}{\rm Max}\{n_i^{\max} \}=2$\,(Page 220 of \cite{Bourbaki}),
\item[(5)] Type $E_6$: $\underset{i}{\rm Max}\{n_i^{\max} \}=3$\,\,(Page 229 of \cite{Bourbaki}),
\item[(6)] Type $E_7$: $\underset{i}{\rm Max}\{n_i^{\max} \}=4$\,\,(Page 227 of \cite{Bourbaki}),
\item[(7)] Type $E_8$: $\underset{i}{\rm Max}\{n_i^{\max} \}=6$\,\,(Page 226 of \cite{Bourbaki}),
\item[(8)] Type $F_4$: $\underset{i}{\rm Max}\{n_i^{\max} \}=4$\,\,(Page 223 of \cite{Bourbaki}),
\item[(9)] Type $G_2$: $\underset{i}{\rm Max}\{n_i^{\max} \}=3$\,\,(Page 232 of \cite{Bourbaki}).
\end{enumerate}
Let $p$ satisfies the following hypothesis.
\begin{equation}
\label{eq:hyp}
HYP_{\Phi}: p>\underset{i}{\rm Max}\{n_i^{\max} \}
\end{equation}
Note that $HYP_{\Phi}$ depends on the type of the Lie algebra. Also note that if $p>5$ (which was the hypothesis assumed in proposition \ref{xxsec3.1}) then $p$ satisfies $HYP_{\Phi}$ for all root types $\Phi$.

For primes $p$ satisfying $ HYP_{\Phi}$, the lowest degree terms of $[y^{p^r}_{\alpha}, y^{p^s}_{-\alpha}]$ 
will come from the part $\prod_{\delta_i\in\Pi}(1+y_{\delta_i})^{n_i\beta}$ because
$\beta_0=\beta_1=\cdots=\beta_{r+s}=0$, $\beta_{r+s+1}=p-1$ and $p^{2r+s+2}$ or $p^{r+2s+2}$ is 
strictly larger than $n_ip^{r+s+1}$, $  r,s\geq 0$, because $p$ satisfies $HYP_{\Phi}$ 


Now \begin{align*}\prod_{\delta_i\in\Pi}(1+y_{\delta_i})^{n_i\beta}&=\prod_{\delta_i\in\Pi}(1+y_{\delta_i})^{n_i(\beta_0+\beta_1p+\cdots+\beta_{r+s}p^{r+s}+\beta_{r+s+1}p^{r+s+1}+\cdots)}\\
&=\prod_{\delta_i\in\Pi}(1+y_{\delta_i})^{n_i(\beta_{r+s+1}p^{r+s+1}+\cdots)}\\
&=\prod_{\delta_i\in\Pi}(1+y_{\delta_i})^{n_i((p-1)p^{r+s+1}+\cdots)}.
\end{align*}
Therefore we obtain
$$[y^{p^r}_{\alpha}, y^{p^s}_{-\alpha}]_{\circ}=\sum_{i=1}^{\ell}n_iy_{\delta_i}^{p^{r+s+1}},\ \ \  \ell=|\Pi|.$$
This completes the proof.
\end{proof}

Let us extend Proposition \ref{xxsec3.1} to the cases of $p=3$ and $p=5$. 

We are going to deduce the lowest degree terms of the commutators whenever $p=5$ and $p=3$.

For the prime integer $p=5$, note that the relations (1), (2), (3) of Proposition \ref{xxsec3.1} remain true because 
$c_{11}\in \{\pm1,\pm2,\pm3\}$, 
$\langle\alpha,\delta \rangle\in \{0, \pm1, \pm2\}$. We only need to modify (4) of Proposition \ref{xxsec3.1} 
for $p=5$. Notice that (4) of Proposition \ref{xxsec3.1} is the lowest degree term of the commutator 
relation $[y_\alpha^{p^r}, y_{-\alpha}^{p^s}]_\circ$. By \ref{eqnumber} we know that
\begin{align}
[y^{p^r}_{\alpha}, y^{p^s}_{-\alpha}]=&(1+y_{\alpha})^{p^r}[1-{(1+y_{\alpha}^{p^{2r+s+2}})}^{-(1+p^{r+s+2})^{-1}}\prod_{\delta_i\in\Pi}(1+y_{\delta_i}^{n_i\beta})
{(1+y_{-\alpha}^{p^{r+2s+2}})}^{-(1+p^{r+s+2})^{-1}}](1+y_{-\alpha})^{p^s}.\end{align}

For the prime integer $p=5$, the argument given before to deduce the lowest degree term $[y_\alpha^{p^r}, y_{-\alpha}^{p^s}]_\circ$ 
of $[y_\alpha^{p^r}, y_{-\alpha}^{p^s}]$ only fails for type $E_8$ where $\underset{i}{\rm max}\, \{n_i^{\rm max}\}=6$ (That is $p=5$ does not satisfy $HYP_{\Phi}$). 
From \cite[Page 225]{Bourbaki}, we see that if $\alpha$ is a $+ve$ root and $\alpha=\sum_{i=1}^8 n_i\delta_i$ 
is the root system of type $E_8$, then there must exist some $n_i$'s such that $n_i$'s $<5$. In other words, all of 
$n_i$'s can not be $\geq 5$ for all $i\in [1, 8]$. Then, for type $E_8$ and prime integer $p=5$, the lowest degree term
$$
[y_\alpha^{5^r}, y_{-\alpha}^{5^s}]_\circ=\sum_{i=1}^\ell m_i y_{\delta_i}^{5^{r+s+1}},\ \  where \left\{
\begin{aligned}
 & m_i=n_i\  \text{if} \  n_i<5; \\
& m_i=0  \ \text{ if}\  n_i\geq 5;\\
& \alpha=\sum_i^8 n_i \delta_i
\end{aligned}
\right.
$$
We get the above 
lowest degree term by using the similar argument as (4) of Proposition \ref{xxsec3.1} using equation \ref{eqnumber} and noticing that $p^{2r+s+2}$ or $p^{r+2s+2}$ are less than or equal to $n_ip^{r+s+1}$ for $r=s=0$ and $n_i \geq 5$. The point is $[y_\alpha^{5^r}, y_{-\alpha}^{5^s}]_\circ$ contains only powers $y_{-}^{5^{r+s+1}}$ and this is what we will use in the future.
This completes the proof of case $p=5$. 

Now let us consider the case of $p=3$. It should be remarked that (1) and (3) of Proposition \ref{xxsec3.1} 
remain unchanged as $\langle\alpha,\delta \rangle\in \{0, \pm1, \pm2\}$. Let us look at $[y_{\alpha_1}^{p^r}, y_{\alpha_2}^{p^s}]_\circ$ 
if $\alpha_1+\alpha_2\in \Phi$, $\alpha_1\neq -\alpha_2$ (i.e. part (2) of Proposition \ref{xxsec3.1}) for $p=3$. 
Note that we have 
\begin{align*}
(1+y^{3^r}_{\alpha_1})(1+y^{3^s}_{\alpha_2})=&(1+y^{3^{r+s+1}}_{\alpha_1+\alpha_2})^{c_{11}}\prod_{i,j>0, (i,j)\neq(1,1)}(1+y_{i\alpha_1+j\alpha_2}^{3^{i(r+1)+j(s+1)-1}})^{c_{ij}}\\
&\times(1+y^{3^s}_{\alpha_2})(1+y^{3^r}_{\alpha_1}).
\end{align*}
Here $c_{ij}\in \{\pm1,\pm2,\pm3\}$. Suppose that $c_{11}\neq \pm3$. Then 
$$
[y^{3^r}_{\alpha_1}, y^{3^s}_{\alpha_2}]_\circ=c_{11} y_{\alpha_1+\alpha_2}^{3^{r+s+1}}.
$$
Suppose that $c_{11}=\pm3$. Since $(r+1)+(s+1)=r+s+2\leq i(r+1)+j(s+1)-1$ for all $(i, j)$ with $i, j>0$ and $(i, j)\neq (1, 1)$, 
we have 
\begin{align*}
[y^{3^r}_{\alpha_1}, y^{3^s}_{\alpha_2}]_\circ= & s_{11} y_{\alpha_1+\alpha_2}^{3^{r+s+2}}+\text{possibly some other terms of the form}\\
& s_{ij}y_{i\alpha_1+j\alpha_2}^{3^{r+s+2}}\  \text{for} \ i,j>0, (i, j)\neq (1,1)\ \text{depending on} \ c_{ij},
\end{align*}
where $s_{11}\neq 0$ and $s_{ij}$ for $i, j>0$, $(i, j)\neq (1,1)$ might be zero. 
Here also, the main point is not the exact values of the $s_{i,j}$'s but the fact that 
$[y^{3^r}_{\alpha_1}, y^{3^s}_{\alpha_2}]_\circ$ contains only powers $y_{-}^{3^{r+s+2}}$. This will later be used in the proof of our main theorem.

Let us next look at (4) of Proposition \ref{xxsec3.1} for $p=3$ and $p$ does not satify $HYP_{\Phi}$. We just need to find $[y_\alpha^{3^r}, y_{-\alpha}^{3^s}]_\circ$ 
for $p=3$. Again from Bourbaki classification of root systems and corresponding expressions for 
$\alpha_{\rm max}$ for type $F_4, E_6, E_7, E_8$ and $G_2$ \cite[Page 223--Page 232]{Bourbaki}, we see 
that if $\alpha\in \Phi$, $\alpha$ $+ve$ root, $\alpha=\sum_{i=1}^\ell n_i \delta_i$, then there must 
exist some $n_i$ such that $n_i\leq 2$. So, by the same argument as for the case $p=5$ and type $E_8$, we 
get
$$
[y_\alpha^{3^r}, y_{-\alpha}^{3^s}]_\circ=\sum_{i=1}^\ell m_i y_{\delta_i}^{3^{r+s+1}},\ \  where \left\{
\begin{aligned}
 & m_i=n_i\  \text{if} \  n_i\leq 2; \\
& m_i=0  \ \text{ if}\  n_i\geq 3;\\
& \alpha=\sum_i^l n_i \delta_i\, .
\end{aligned}
\right.
$$
This completes the computation of the lowest degree terms for the prime integer $p=3$.


\section{Partial Differential Equations}
\label{xxsec4}

Let
\begin{equation}\label{eq:multiindex}
\omega=\sum_{ n_{\underline{\xi}}}^{\rm finite}a_{\underline{\xi}}\prod_{\xi\in\Phi^-, \Pi, \Phi^+}(y_{\xi}^{p^s})^{ n_{\xi}},
\end{equation}
where the sum is finite, $ n_{\underline{{\xi}}}=(( n_{\xi}), \xi\in \Phi^-, \Pi, \Phi^+)$, $ n_{\xi}\in \mathbb{N}\cup \{0\}$, $a_{\underline{\xi}}\in \mathbb{F}_p$,
$\underline{\xi}=( \xi\in \Phi^-, \Pi, \Phi^+)$, i.e., $\underline{\xi}$ is just the collection of the 
$-ve$, simple and $+ve$ roots ordered according to the order of Lazard basis,
i.e. according to an order compatible with increasing height function on the roots.

\textbf{Explanation of multi-index notation in \eqref{eq:multiindex}}:
	Before proceeding, for the convenience of the reader,  let us explain our multi-index notation \eqref{eq:multiindex} in a little more details because we will be using it throughout the text. For example, let us take the particular case when $G=SL_2(\mathbb{Z}_p)$. 
Then the only negative root is $\xi = (2,1) \in \Phi^-$ and the corresponding element $y_\xi = \left[ \begin{matrix} 0 & 0  \\ p & 0  \end{matrix} \right]$. Therefore working over $\mathbb{F}_p$,  $y_\xi ^{p^s}= \left[ \begin{matrix} 0 & 0  \\ p^{s+1} & 0  \end{matrix} \right]$.

  For the simple root $\xi  = (1,2) \in \Pi$, the corresponding element $y_\xi = \left[ \begin{matrix} (1+p)-1 & 0  \\  0 & (1+p)^{-1} -1  \end{matrix} \right]$. Hence $y_\xi ^{p^s}= \left[ \begin{matrix} (1+p)^s-1 & 0  \\  0 & (1+p)^{-s} -1  \end{matrix} \right]$.

  For the positive root $\xi = (1,2) \in \Phi^+$, the corresponding element $y_\xi = \left[ \begin{matrix} 0 & p  \\ 0 & 0  \end{matrix} \right]$.  Therefore, $y_\xi ^{p^s}= \left[ \begin{matrix} 0 & p^{s+1}  \\ 0 & 0  \end{matrix} \right]$.
  
  So if $G$ is $SL_2$, then $\omega$ denotes just a finite sum of monomials of the form $\prod_{\xi\in\Phi^-, \Pi, \Phi^+}(y_{\xi}^{p^s})^{ n_{\xi}}$ (where $n_\xi$ are non-negative integers)  multiplied  with  scalars $a_{\underline{\xi}}$ depending on each monomial. This is certainly an element in the mod-$p$ Iwasawa algebra of $G(1)$.

Let $\gamma \in\Phi$ or $\Pi$. Our goal in this section is to show the following lemma which
will be crucial to show our main theorem\,(ref. Theorem \ref{xxsec5.1}).
\begin{lemma}\label{xxsec4.1}
$$
[y^{p^r}_{\gamma}, \omega]_{\circ}=\sum_{
 \eta \ \text{such that}\
 [y^{p^r}_{\gamma}, y^{p^s}_{\eta}]_{\circ}\neq 0}\frac{\partial\omega}{\partial y^{p^s}_{\eta}}[y^{p^r}_{\gamma}, y^{p^s}_{\eta}]_{\circ},$$
where the sum is over all roots $\eta$ such that $[y^{p^r}_{\gamma}, y^{p^s}_{\eta}]_{\circ}\neq 0$.
\end{lemma}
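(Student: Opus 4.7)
The plan is to reduce by linearity to a single monomial, apply the standard Leibniz rule for commutators, and then control the resulting non-commutativity using the degree estimates of Proposition \ref{xxsec3.1}.

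First, since both sides of the asserted identity are $\mathbb{F}_p$-linear in $\omega$, I can assume $\omega = M := \prod_\xi (y_\xi^{p^s})^{n_\xi}$ is a single ordered monomial. Write $M = Y_1 Y_2 \cdots Y_N$ with $Y_k = y_{\xi_k}^{p^s}$, where the factors are listed with multiplicity in the Lazard order. Iterating the identity $[A,BC] = [A,B]C + B[A,C]$ yields
$$[y_\gamma^{p^r}, M] = \sum_{k=1}^N Y_1 \cdots Y_{k-1}\, [y_\gamma^{p^r}, Y_k]\, Y_{k+1} \cdots Y_N.$$
Replacing each inner commutator by its leading part $[y_\gamma^{p^r}, y_{\xi_k}^{p^s}]_\circ$ only changes each summand by strictly higher-degree terms, so the lowest-degree part of the left-hand side coincides with the lowest-degree part of
$$S := \sum_{k=1}^N Y_1 \cdots Y_{k-1}\, [y_\gamma^{p^r}, y_{\xi_k}^{p^s}]_\circ\, Y_{k+1} \cdots Y_N.$$

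Next I would push each leading commutator past the surrounding $Y_j$'s. By Proposition \ref{xxsec3.1} (together with its $p=3,5$ extensions), $[y_\xi^{p^s}, y_\zeta^{p^t}]_\circ$ is an $\mathbb{F}_p$-linear combination of basis elements $y_\theta^{p^u}$ with $u\geq s+t+1$, so its $\omega$-degree is at least $p^{s+t+1}$, which strictly exceeds $p^s + p^t$ for every $p\geq 3$ and $s,t \geq 0$ (since $p\cdot p^{s+t}\geq p\cdot\max(p^s,p^t)\geq (p/2)(p^s+p^t)>p^s+p^t$). Hence every swap of the form $y_\zeta^{p^t}\cdot y_\theta^{p^u} = y_\theta^{p^u}\cdot y_\zeta^{p^t} + [y_\zeta^{p^t}, y_\theta^{p^u}]$ performed while moving a leading commutator past a $Y_j$ produces only terms in strictly higher filtration. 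Therefore, modulo higher-degree terms,
$$S \equiv \sum_{k=1}^N \bigl(Y_1 \cdots \widehat{Y_k} \cdots Y_N\bigr)\cdot [y_\gamma^{p^r}, y_{\xi_k}^{p^s}]_\circ,$$
where $\widehat{Y_k}$ indicates omission. Grouping the summands by $\eta := \xi_k$, and noting that exactly $n_\eta$ of the indices satisfy $\xi_k = \eta$, the combined coefficient becomes $n_\eta (y_\eta^{p^s})^{n_\eta - 1}\prod_{\xi\neq\eta}(y_\xi^{p^s})^{n_\xi} = \partial M/\partial y_\eta^{p^s}$, yielding the desired formula after discarding those $\eta$ with $[y_\gamma^{p^r}, y_\eta^{p^s}]_\circ = 0$.

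The main obstacle is bookkeeping: one must verify uniformly, across all the cases of Proposition \ref{xxsec3.1}, that the filtration inequality $\omega\bigl([y_\xi^{p^s}, y_\zeta^{p^t}]_\circ\bigr) > p^s + p^t$ genuinely holds --- including in the exceptional situations ($c_{11}\equiv 0 \pmod{p}$ at $p=3$, and type $E_8$ at $p=5$) where the degree of the leading commutator can bump up from $p^{s+t+1}$ to $p^{s+t+2}$. Once this filtration bound is secured, the proof is essentially formal: the commutator with $y_\gamma^{p^r}$ acts as a derivation on the associated graded ring $\mathrm{gr}\,\Omega_{G(1)}$, and the lemma is exactly the classical Leibniz rule applied to $M$ at the symbol level.
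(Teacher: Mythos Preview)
Your proof is correct and is essentially the same argument as the paper's, just packaged more cleanly: the paper also passes $y_\gamma^{p^r}$ through the monomial one factor at a time (your iterated Leibniz rule), pops out the commutators $[y_\gamma^{p^r},y_\eta^{p^s}]_\circ$, and then uses Proposition~\ref{xxsec3.1} to justify that moving these past the remaining $y_\xi^{p^s}$ only introduces strictly higher-degree terms (the paper compares degree $p^{r+2s+2}$ against the product's degree, which is your inequality $p^{s+t+1}>p^s+p^t$). Your closing worry about the exceptional cases at $p=3,5$ is harmless: there the leading commutator has degree $p^{s+t+2}$ rather than $p^{s+t+1}$, which only strengthens the required filtration bound.
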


\begin{proof}
$$
[y^{p^r}_{\gamma}, \omega]_{\circ}= \left \{ \sum_{ n_{\underline{\xi}}}a_{\underline{\xi}}y^{p^r}_{\gamma}(\prod_{\xi\in \Phi^-, \Pi, \Phi^+}{(y^{p^s}_{\xi})}^{ n_{\xi}})- \sum_{ n_{\underline{\xi}}}a_{\underline{\xi}}    (\prod_{\xi}{(y^{p^s}_{\xi})}^{ n_{\xi}}) y^{p^r}_{\gamma}          \right\}_{\circ}.$$
Suppose $\eta$ is the first root in the ordering such that  $[y^{p^r}_{\gamma}, y^{p^s}_{\eta}]_{\circ}\neq 0$. In the following, we are going to change $y^{p^r}_{\gamma}y^{p^s}_{\eta} \rightarrow y^{p^s}_{\eta}y^{p^r}_{\gamma}$ popping out a $[y^{p^r}_{\gamma}, y^{p^s}_{\eta}]$ term.
We  obtain
\begin{align}[y^{p^r}_{\gamma}, \omega]_{\circ}&= \Bigg\{ \sum_{ n_{\underline{\xi}}}a_{\underline{\xi}}\bigg(\prod_{\xi<\eta}{(y^{p^s}_{\xi})}^{ n_{\xi}} y^{p^s}_{\eta} y^{p^r}_{\gamma}{(y^{p^s}_{\eta})}^{n_{\eta^{-1}}}\prod_{\xi>\eta}{(y^{p^s}_{\xi})}^{n_{\xi}}\bigg)\\&
-\sum_{ n_{\underline{\xi}}}a_{\underline{\xi}}(\prod_{\xi\in \Phi^-, \Pi, \Phi^+}{(y^{p^s}_{\xi})}^{ n_{\xi}})y^{p^r}_{\gamma}\Bigg\}_{\circ} \\
&+\sum_{ n_{\underline{\xi}}}a_{\underline{\xi}}(\prod_{\xi<\eta}{(y^{p^s}_{\xi})}^{ n_{\xi}}[y^{p^r}_{\gamma}, y^{p^s}_{\eta}]_{\circ}(y^{p^s}_{\eta} )^{n_{\eta-1}}\prod_{\xi>\eta}( y^{p^s}_{\xi} )^{n_{\xi}}.
\end{align}
(Here $\xi <\eta$ means all the roots $\xi$ which come before the root $\eta$ in the total order on the roots compatible with the ordering in  Lazard's order basis).
Iterating the above step until $y^{p^r}_{\gamma} $ passes through all $y^{p^s}_{\eta} $ in (4.1) 
and pops out $[y^{p^r}_{\gamma}, y^{p^s}_{\eta}]_{\circ}$ term in (4.3). Then we get
\begin{align}[y^{p^r}_{\gamma}, \omega]_{\circ}&=\Bigg\{ \sum_{ n_{\underline{\xi}}}a_{\underline{\xi}}(\prod_{\xi<\eta}{(y^{p^s}_{\xi})}^{ n_{\xi}} (y^{p^s}_{\eta} )^{n_{\eta}} y^{p^r}_{\gamma}\prod_{\xi>\eta}{(y^{p^s}_{\xi})}^{n_{\xi}}\\&
-\sum_{ n_{\underline{\xi}}}a_{\underline{\xi}}(\prod_{\xi\in \Phi^-, \Pi, \Phi^+}{(y^{p^s}_{\xi})}^{ n_{\xi}})y^{p^r}_{\gamma}\Bigg\}_{\circ} \\
&+\sum_{ n_{\underline{\xi}}}a_{\underline{\xi}}(\prod_{\xi<\eta}{(y^{p^s}_{\xi})}^{ n_{\xi}}[y^{p^r}_{\gamma}, y^{p^s}_{\eta}]_{\circ}(y^{p^s}_{\eta} )^{n_{\eta-1}}\prod_{\xi>\eta}( y^{p^s}_{\xi} )^{n_{\xi}}\notag\\
&+\sum_{ n_{\underline{\xi}}}a_{\underline{\xi}}(\prod_{\xi<\eta}{(y^{p^s}_{\xi})}^{ n_{\xi}}y^{p^s}_{\eta}[y^{p^r}_{\gamma}, y^{p^s}_{\eta}]_{\circ}(y^{p^s}_{\eta} )^{n_{\eta-2}}\prod_{\xi>\eta}( y^{p^s}_{\xi} )^{n_{\xi}}\notag\\
&+\cdots \cdots \cdots \notag\\
& +\sum_{ n_{\underline{\xi}}}a_{\underline{\xi}}(\prod_{\xi<\eta}{(y^{p^s}_{\xi})}^{ n_{\xi}}(y^{p^s}_{\eta} )^{n_{\eta-1}}[y^{p^r}_{\gamma}, y^{p^s}_{\eta}]_{\circ}\prod_{\xi>\eta}( y^{p^s}_{\xi} )^{n_{\xi}}
\end{align}
Applying Proposition \ref{xxsec3.1}, we have
$$[[y^{p^r}_{\gamma}, y^{p^s}_{\eta}]_{\circ},y^{p^s}_{\xi}]_{\circ} \  \text{is either } 0 \text{ or has terms of degree } p^{r+2s+2}.$$ But degree of $[y^{p^r}_{\gamma}, y^{p^s}_{\eta}]_{\circ}y^{p^s}_{\xi}$ is $p^{r+2s+1}$. 
Therefore,
$$
[y^{p^r}_{\gamma}, y^{p^s}_{\eta}]_{\circ}y^{p^s}_{\xi}=y^{p^s}_{\xi}[y^{p^r}_{\gamma}, y^{p^s}_{\eta}]_{\circ} \text{ modulo higher-degree terms }.
$$


We therefore get
\begin{align}[y^{p^r}_{\gamma}, \omega]_{\circ}&=\bigg\{\sum_{ n_{\underline{\xi}}}a_{\underline{\xi}}(\prod_{\xi<\eta}{(y^{p^s}_{\xi})}^{ n_{\xi}} {(y^{p^s}_{\eta})}^{ n_{\eta}} y^{p^r}_{\gamma}\prod_{\xi>\eta}{(y^{p^s}_{\xi})}^{n_{\xi}})\notag\\&
-\sum_{ n_{\underline{\xi}}}a_{\underline{\xi}}(\prod_{\xi\in \Phi^-, \Pi, \Phi^+}{(y^{p^s}_{\xi})}^{ n_{\xi}})y^{p^r}_{\gamma}\bigg\}_{\circ}+\frac{\partial\omega}{\partial y^{p^s}_{\eta}}[y^{p^r}_{\gamma},y^{p^s}_{\eta}]_{\circ}.
\end{align}
Note that here we are working under the background of the lowest-degree
terms. So the ``=" sign does make sense against the backdrop of $[-,-]_{\circ}$.

 Iterating this method until $y^{p^s}_{\gamma}$ passes  through all $y^{p^s}_{\xi}$ for all $\xi>\eta$, we finally arrive at
$$[y^{p^r}_{\gamma}, \omega]_{\circ}=\sum_{\eta \ \text{such that}\  [y_{\gamma}, y_{\eta}]_{\circ}\neq0}\frac{\partial\omega}{\partial y^{p^s}_{\eta}}[y^{p^r}_{\gamma},y^{p^s}_{\eta}]_{\circ} .$$
This completes proof of this lemma.

\end{proof}

\section{Main Result and Its Proof}\label{xxsec5}

In this section we will state and prove our main result assuming Claim \ref{xxsec5.3}, which we will prove 
in the next section. Let $G$ be a semi-simple, simply connected, split Chevalley group 
over $\mathbb{Z}_p$, $G(1)$ be the first congruence kernel of $G$ and $\Omega_{G(1)}$ 
be the mod-$p$ Iwasawa algebra of $G(1)$ over $\mathbb{F}_p$. Let us recall that an element $r\in\Omega_{G(1)}$ is said to be
\textit{normal} if  $r\Omega_{G(1)}=\Omega_{G(1)}r$. Clearly, $\mathbb{F}_p$ and 
those elements of $\Omega_{G(1)}$ which contain constant terms are normal elements. 
It is a natural question whether the converse statement of this result holds true. 
Our main objective is to determine which elements in the $\Omega_{G(1)}$ are eligible 
for normal elements. Our main theorem is the following

\begin{theorem}\label{xxsec5.1}
Let $p$ be a prime with $p\geq 3$ and $G$ be a semi-simple, simply connected, split Chevalley group 
over $\mathbb{Z}_p$. Suppose that $G$ is one of the following Chevalley groups of Lie type: 
$A_{\ell}\, (\ell\geq 1), B_{\ell}\,(\ell\geq 2), C_{\ell}\,(\ell\geq 2), D_{\ell}\,(\ell\geq 3), E_6, E_7, E_8, F_4, G_2$.
Then for any nonzero element $W\in \Omega_{G(1)}$, $W$ is a normal element if and only if 
$W$, as a noncommuttaive formal power series, contains constant terms. In this case, $W$ is a unit. 
\end{theorem}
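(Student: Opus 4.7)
For the ``if'' direction, together with the final assertion, I would simply observe that $\Omega_{G(1)}$ is a complete noetherian local ring whose unique maximal ideal is the augmentation ideal generated by the $y_\xi$'s; hence any $W$ with nonzero constant term is automatically a unit, and every unit is trivially normal, since $W\Omega_{G(1)} = \Omega_{G(1)} = \Omega_{G(1)}W$.

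For the ``only if'' direction my plan is to argue by contradiction. Suppose $W \in \Omega_{G(1)}$ is a nonzero normal element with vanishing constant term, and let $d \geq 1$ be the total degree of the lowest-degree homogeneous part $W_\circ$ of $W$ written in the Lazard-ordered basis. Normality provides, for each generator $y_\gamma$ (with $\gamma$ ranging over $\Phi \cup \Pi$), an element $f_\gamma \in \Omega_{G(1)}$ with $y_\gamma W = W f_\gamma$, and equivalently
\[
[y_\gamma, W] \;=\; W(f_\gamma - y_\gamma).
\]
A first, routine, leading-order comparison in the commutative associated graded of $\Omega_{G(1)}$ (a polynomial ring over $\mathbb{F}_p$, hence a domain in which one can cancel $W_\circ$) will force $(f_\gamma)_\circ = y_\gamma$, so the right-hand side has Lazard degree at least $d+2$.

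The next step is to apply Lemma \ref{xxsec4.1} to rewrite the left-hand side (after replacing $y_\gamma$ by $y_\gamma^{p^r}$ and inserting $y_\eta^{p^s}$ on the inside) as
\[
[y_\gamma^{p^r}, W]_\circ \;=\; \sum_{\eta}\; \frac{\partial W}{\partial y_\eta^{p^s}}\; [y_\gamma^{p^r}, y_\eta^{p^s}]_\circ,
\]
and then substitute the explicit formulas of Proposition \ref{xxsec3.1}: each bracket is either zero, a scalar multiple of $y_{\gamma+\eta}^{p^{r+s+1}}$, a Cartan-integer multiple of $y_\gamma^{p^{r+s+1}}$, or (when $\eta = -\gamma$) the linear combination $-\sum_i n_i y_{\delta_i}^{p^{r+s+1}}$ of torus generators. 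Letting $\gamma$, $r$, $s$ vary produces a coupled system of ``partial differential equations'' indexed by the roots and by the levels of the Lazard filtration; I would then compare these degree by degree with $W(f_\gamma - y_\gamma)$, iterating the argument on the successive lowest-degree truncations of $W$.

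The crux is to conclude that this PDE system forces $W_\circ \equiv 0$, contradicting $W \neq 0$; this will be exactly Claim \ref{xxsec5.3}, whose proof is deferred to Section \ref{xxsec6} and is executed by a Dynkin-diagram chase, case-by-case through $A_\ell, B_\ell, C_\ell, D_\ell$ and the exceptional types $E_6, E_7, E_8, F_4, G_2$: one picks a well-chosen root $\gamma$, uses the corresponding PDE to force certain partial derivatives of $W_\circ$ to vanish, and then walks through the diagram, propagating the vanishing to every generator in turn. The small primes $p = 3$ and $p = 5$ (where the hypothesis $HYP_\Phi$ of \eqref{eq:hyp} can fail, notably for $E_8$) are accommodated with the modified commutator formulas computed at the end of Section \ref{xxsec3}, which alter the shape of the source terms but not the structure of the chase. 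The main obstacle is precisely this last Dynkin-diagram step: extracting uniformly, across all nine irreducible root systems and every admissible prime, enough independent PDE constraints to kill $W_\circ$, with the exceptional types and the $p = 3$ corrections being the most delicate because large Cartan integers and higher-order source terms demand finer bookkeeping of the Lazard filtration.
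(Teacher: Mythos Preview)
Your outline captures the right ingredients (Lemma~\ref{xxsec4.1}, Proposition~\ref{xxsec3.1}, and a Dynkin-diagram chase in Section~\ref{xxsec6}), but it mischaracterises the logical architecture of the paper's argument in a way that leaves a genuine gap.

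First, Claim~\ref{xxsec5.3} does \emph{not} say that the PDE system forces $W_\circ\equiv 0$. It is a \emph{divisibility} statement: $w_m \mid \frac{\partial w_d}{\partial y_\gamma^{p^s}}$ for all $\gamma$, where $w_d$ is a homogeneous piece of $W$ in degree $d>m$ (not $w_m=W_\circ$ itself). The Dynkin chase in Section~\ref{xxsec6} propagates this divisibility through the diagram, not a vanishing of partial derivatives of $W_\circ$.

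Second, you are missing the key technical parameter that organises the proof. For each homogeneous piece $w_d$ the paper introduces $s_d=\max\{s:p^s\text{ divides every exponent in }w_d\}$ and sets $s=\min_d s_d$. The argument then splits into two very different cases. In \textbf{Case~1} ($s=s_m$), the lowest-degree equation $[y_\gamma^{p^r},w_m]_\circ = w_m\cdot(D_\gamma(r))_\circ$ together with Claim~\ref{xxsec5.2} (some partial derivative of $w_m$ is nonzero) and a direct degree comparison in the variable $y_{\gamma_1}^{p^s}$ (see equation~(5.6)) already yields a contradiction; no Dynkin chase is needed here. In \textbf{Case~2} ($s<s_m$), one has $[y_\gamma^{p^r},w_d]_\circ = w_m\cdot(D_\gamma(r))_\circ$ for some $d>m$ and $r\gg 0$; now Claim~\ref{xxsec5.3} feeds into Claim~\ref{xxsec5.4}, giving a decomposition $w_d=w_m u+v$ with $v\in\mathbb{F}_p[\underline{y}^{p^{s+1}}]$, and one constructs an infinite sequence $W_0=W,\ W_1=W(1-u),\ W_2=W_1(1-u'),\ldots$ of normal elements with the same leading term $w_m$ but strictly increasing $d(W_n)$, whose limit violates the minimality of $s$. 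Your proposal omits this entire two-case split and the iterative sequence mechanism, which is precisely where Claim~\ref{xxsec5.3} is actually used.
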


\begin{proof}
Let $W$ be a nonzero element of $W\in \Omega_{G(1)}$. 
Suppose that $W$, as a noncommutative formal power series, contains constant terms. 
Then it is straightforward to check that $W$ is invertible. And hence $W$ is a normal 
element of $\Omega_{G(1)}$. In this case, $W$ is a unit.

Let $W$ be a nonzero normal element of $\Omega_{G(1)}$ and $W$ is of the form
$$
W = w_m + w_{m+1} + w_{m+2}+ \cdots+ w_d + \cdots, 
$$
where $w_d$\,($d=m,m+1,m+2,\cdots, m\geq1$) are homogeneous polynomials with 
respect to $y_{\xi}$\,($\xi$ varying through $\Phi$ and $\Pi$) of degree $d$.
\begin{equation} 
w_d =\sum_{m^\prime_{\underline{\xi}}}a_{\underline{\xi}}\prod_{\xi\in \Phi^-, \Pi, \Phi^+}y_{\xi}^{m^\prime_{\xi}},
\end{equation}
(the multi-index notation has same meaning as in \eqref{eq:multiindex} with explanations given after \eqref{eq:multiindex}).

 Moreover, we put
\begin{align*}
s_d=\max\{\ s\ | \ &p^s \text{ is a common divisor of the elements of }\  m'_{\xi}, \\& \ \text{ appearing in (5.1) with}\  m'_{\xi}\neq 0\ \},
\end{align*}
which will be frequently invoked in the sequel.

Since $W$ is a normal element, there exists an element $D_{\gamma}(r)\in \Omega_{G(1)}$ such that
\begin{equation}[y^{p^r}_{\gamma}, W]= W\cdot D_{\gamma}(r)\end{equation}
for each $\gamma\in \Phi$ or $\gamma\in\Pi$. We define
$$
s=\min\{\ s_d\ |\ d=m,m+1,m+2,\cdots \ \}.
$$

We divide the proof of this theorem into two cases:
$s=s_m$ and $s<s_m$.

\vspace{2mm}

\textbf{Case 1:} $s=s_m$.

\vspace{2mm}

In  this case, we by (5.2) get
\begin{equation}[y^{p^r}_{\gamma}, w_m]_{\circ}= w_m\cdot (D_{\gamma}(r))_{\circ}\end{equation}
for each $y_{\gamma}$\,($\gamma\in \Phi$ or $\gamma\in\Pi$). Recall that 
$[y^{p^r}_{\gamma}, w_m]_{\circ}$ and $(D_{\gamma}(r))_{\circ}$ stand for the lowest degree terms in
$[y^{p^r}_{\gamma}, w_m]$ and $D_{\gamma}(r)$, respectively. It should be 
pointed out that $[y^{p^r}_{\gamma}, w_m]_{\circ}$ is a homogenous polynomial of 
degree $m-p^s+p^{r+s+1}$.

We can assume that the lowest-degree homogenous polynomial $w_m$ of $W$ is of the form
\begin{equation}
w_m=\sum_{ n_{\underline{\xi}}}^{\rm finite}a_{\underline{\xi}}\prod_{\xi\in\Phi^-, \Pi, \Phi^+}(y_{\xi}^{p^s})^{ n_{\xi}},
\end{equation}
(as in the beginning of Section \ref{xxsec4}) such that 
$$
w_m\in \mathbb{F}_p[y_{\xi}^{p^s},   \xi\in\Phi^-, \Pi, \Phi^+]\setminus\mathbb{F}_p[y_{\xi}^{p^{s+1}}, \ \xi\in\Phi^-, \Pi, \Phi^+],
$$ 
where $\mathbb{F}_p[y_{\xi}^{p^s}, \ \xi\in\Phi^-, \Pi, \Phi^+]$ denotes the polynomial 
ring generated by  $y_{\xi}^{p^s}$ for $\xi\in\Phi^-, \Pi, \Phi^+$ over the field $\mathbb{F}_p$. 
It follows from Lemma \ref{xxsec4.1} that 
$$
[y^{p^r}_{\gamma}, w_m]_{\circ}=\sum_{\eta \ \text{such that}\  [y^{p^r}_{\gamma}, y^{p^s}_{\eta}]_{\circ}\neq0}\frac{\partial w_m}{\partial y^{p^s}_{\eta}}[y^{p^r}_{\gamma}, y^{p^s}_{\eta}]_{\circ}.
$$

\begin{claim}\label{xxsec5.2}
$\frac{\partial w_m}{\partial y^{p^s}_{\gamma}}$ are not exactly zeroes for all $\gamma\in \Phi $ or $\Pi $.
\end{claim}

\begin{proof}
In view of (5.3), we can write
$$w_m=\sum_{ n_{\gamma}=0}^{a_{\gamma}}(y_{\gamma}^{p^s})^{ n_{\gamma}}v_{n_{\gamma}}(\underline{y}^{\gamma}),$$
where $v_{n_{\gamma}}(\underline{y}^{\gamma})$ is a polynomial over $\mathbb{F}_p$ in $y_{\xi}^{p^s}$ for $\xi \in  \Phi\cup\Pi\setminus \{\gamma\} $.
Suppose on the contrary that
$$
\frac{\partial w_m}{\partial y^{p^s}_{\gamma}}=n_{\gamma}\sum_{ n_{\gamma}=1}^{a_{\gamma}}(y_{\gamma}^{p^s})^{ n_{\gamma}-1}v_{n_{\gamma}}(\underline{y}^{\gamma})=0,\ \  \, \gamma\in \Phi  \ \text{or}\   \Pi.
$$
Considering the polynomial above related to $y^{p^s}_{\gamma}$, we see that 
$n_{\gamma}v_{n_{\gamma}}(\underline{y}^{\gamma})=0$, and we therefore get 
$p\,|\,n_{\gamma}$, $ \gamma\in \Phi $ or $\Pi $. But then $w_m\in\mathbb{F}_p[y_{\xi}^{p^{s+1}}, \ \xi\in\Phi^-, \Pi, \Phi^+]$, 
which is a contradiction. This proves this claim.

\end{proof}
Now we complete the proof of Theorem \ref{xxsec5.1} for the case $s=s_m$. By Claim \ref{xxsec5.2}, 
there exists $ \gamma_1\in\Phi$ or $\gamma_1\in\Pi$ such that $\frac{\partial w_m}{\partial y^{p^s}_{\gamma_1}}\neq 0$. 
Certainly there exists $\alpha \in\Phi$ or $\Pi$ such that $[y^{p^r}_{\alpha}, y^{p^s}_{\gamma_1}]_{\circ}\neq0$\,(if $\gamma_1\in \Phi$, one can 
choose $\alpha\in\Phi$ such that $\alpha+\gamma_1\in \Phi$; and if $\gamma_1\in\Pi$, we can 
choose any $\alpha\in\Phi$ and use Proportion \ref{xxsec3.1}). 
Then
$$
w_m\cdot (D_{\alpha}(r))_{\circ}=[y^{p^r}_{\alpha},  w_m]_{\circ}=\frac{\partial w_m}{\partial y^{p^s}_{\gamma}}[y^{p^r}_{\alpha}, y^{p^s}_{\gamma_1}]_{\circ}+\text{other terms}\,(\text{cf. Lemma \ref{xxsec4.1}}).
$$
By Proposition \ref{xxsec3.1}, for $p>5$, we know that
$$ [y^{p^r}_{\alpha}, y^{p^s}_{\gamma_1}]_{\circ}=\left\{
\begin{aligned}
 &c_{11}y_{\alpha+\gamma_1}^{p^{r+s+1}}, \ \text{ if} \ \alpha\in\Phi, \gamma\in \Phi ,  \alpha+\gamma_1\in\Phi ,  \alpha\neq -\gamma_1 ; \\
& -\langle \alpha,\gamma_1\rangle y_{\alpha}^{p^{r+s+1}},  \ \text{ if}\  \alpha\in\Phi, \gamma_1\in \Pi .
\end{aligned}
\right.
$$
Suppose we have the first one, i.e. $[y^{p^r}_{\alpha}, y^{p^s}_{\gamma_1}]_{\circ}=c_{11}y_{\alpha+\gamma_1}^{p^{r+s+1}}$. Then
\begin{align}
w_m\cdot (D_{\alpha}(r))_{\circ}&=\frac{\partial w_m}{\partial y^{p^s}_{\gamma_1}}[y^{p^r}_{\alpha}, y^{p^s}_{\gamma_1}]_{\circ}+\text{other terms}\\
&=\frac{\partial w_m}{\partial y^{p^s}_{\gamma_1}}c_{11}y_{\alpha+\gamma_1}^{p^{r+s+1}}+\text{other terms}\neq0\,(\text{by the choice of $\gamma_1$})\notag.
\end{align}
Comparing the coefficient of $y_{\alpha+\gamma_1}^{p^{r+s+1}}$ of the above relation on 
both sides we see that
\begin{equation}\frac{\partial w_m}{\partial y^{p^s}_{\gamma_1}}c_{11}y_{\alpha+\gamma_1}^{p^{r+s+1}}=w_m\cdot U(\underline{y})\neq0,
\end{equation}
where $U(\underline{y})$ is the polynomial form coming from $(D_{\alpha}(r))_{\circ}$. Now comparing the degree of $y_{\gamma_1}^{p^s}$ on both sides of relation (5.6) we immediately arrive
at a contradiction\,(Note that in the above argument we can also choose $\alpha$ to be $-\gamma_1$ and similar contradiction will arise). Now  if
$$[y^{p^r}_{\alpha}, y^{p^s}_{\gamma_1}]_{\circ}=-\langle \alpha, \gamma_1\rangle y_{\alpha}^{p^{r+s+1}},$$ by the same argument as before\,(replacing $c_{11}y_{\alpha+\gamma_1}^{p^{r+s+1}}$ in the argument above by $-\langle \alpha, \gamma_1\rangle y_{\alpha}^{p^{r+s+1}}$) we arrive at a contradiction by comparing degrees of
$y^{p^s}_{\gamma_1}$. The argument for $p=3,5$ extending Proposition \ref{xxsec3.1} is the same. This implies that $W$, as a non-zero normal element in $\Omega_{G(1)}$, 
must contain constant terms and  completes the proof of Theorem 5.1 for the case $s=s_m$.
\vspace{2mm}

\textbf{Case 2.} $s<s_m$.

\vspace{2mm}

Now there exists some fixed $d$ with $d>m$ such that $s=s_d<s_m$, and it follows from (5.2) that
$$[y^{p^r}_{\gamma},w_d]_{\circ}=w_m\cdot ( D_{\gamma}(r))_{\circ}$$
for  each $y_{\gamma}$\,($\gamma\in \Phi$ or $\gamma\in\Pi$) provided $r\gg0$.

To proceed our discussion, we assume that $w_d$ is of the form
\begin{equation}w_d=\sum_{n_{\underline{\xi}}=0}^{p-1}\prod_{\ \ \xi\in\Phi^-, \Pi, \Phi^+}(y_{\xi}^{p^s})^{n_{\xi}}h_{n_{\underline{\xi}}}(\underline{y}^{p^{s+1}}), 
\end{equation}
where the sum is over all the component $n_{\xi}$ of $n_{\underline{\xi}}$ varying from $0$ to $p-1$. Here 
$$w_d \in \mathbb{F}_p[y_{\xi}^{p^s},   \xi\in\Phi^-, \Pi, \Phi^+]\setminus\mathbb{F}_p[y_{\xi}^{p^{s+1}}, \ \xi\in\Phi^-, \Pi, \Phi^+],
$$ 
where $\mathbb{F}_p[y_{\xi}^{p^s}, \ \xi\in\Phi^-, \Pi, \Phi^+]$ denotes the polynomial 
ring generated by  $y_{\xi}^{p^s}$ for $\xi\in\Phi^-, \Pi, \Phi^+$ over the field $\mathbb{F}_p$. 

\begin{claim}\label{xxsec5.3}
$w_m\,|\,\frac{\partial w_d}{\partial y^{p^s}_{\gamma}}$ for all $ \gamma\in\Phi$ or $\gamma\in\Pi$.
\end{claim}
Assume that Claim \ref{xxsec5.3} holds true now. We are going to prove it later in Section \ref{xxsec6} after finishing the proof of our main theorem.
Let us simplify our notations and denote the polynomial ring $\mathbb{F}_p[y_{\xi}^{p^s}, \ \xi\in\Phi^-, \Pi, \Phi^+]$ by $\mathbb{F}_p[\underline{y}^{p^s}] $.
\begin{claim}\label{xxsec5.4}
For $w_m, w_d$, there exist $ u\in \mathbb{F}_p[\underline{y}^{p^s}] $ and $v\in  \mathbb{F}_p[\underline{y}^{p^{s+1}}]$ such that $w_d=w_mu+v$
\end{claim}
\begin{proof}
According to Claim \ref{xxsec5.3}, there exists $ \,u_{\gamma}\in \mathbb{F}_p[\underline{y}^{p^s}] $ such that
 \begin{equation}\frac{\partial w_d}{\partial y^{p^s}_{\gamma}}=w_mu_{\gamma},\ \    \gamma\in \Phi,\,\Pi.\end{equation}
 Suppose that $u_{\gamma}$ is of the following form:
 \begin{equation}u_{\gamma}=\sum_{n_{\underline{\xi}}=0}^{p-1}\prod_{\ \ \xi\in\Phi^-, \Pi, \Phi^+}(y_{\xi}^{p^s})^{n_{\xi}}g_{n_{\underline{\xi}}}^{\gamma}(\underline{y}^{p^{s+1}}).
 \end{equation}
Using (5.7) we get $$\frac{\partial w_d}{\partial y^{p^s}_{\gamma}}=\sum_{\substack{n_{\xi}=0\\ (\xi<\gamma)}}^{p-1}\sum_{n_{\gamma}=1}^{p-1}\sum_{\substack{n_{\xi}=0\\ (\xi>\gamma)}}^{p-1}n_{\gamma}
 \prod_{\xi<\gamma}(y_{\xi}^{p^s})^{n_{\xi}}(y_{\gamma}^{p^s})^{n_{\gamma-1}} \prod_{\xi>\gamma}(y_{\xi}^{p^s})^{n_{\xi}}h_{n_{\underline{\xi}}}(\underline{y}^{p^{s+1}}).$$
 Here we need to keep tract of the components $n_\xi $ of $n_{\underline{\xi}}$ for the roots $\xi $ coming before $\gamma$, that is $\xi <\gamma$ (in the total order on the roots according to Lazard's ordered basis), $\xi =\gamma$ and $\xi>\gamma$. 
 We also include this in the subindices of the polynomial    $h_{n_{\underline{\xi}}}(\underline{y}^{p^{s+1}})$. In the following we write  $h_{n_{\underline{\xi}}}(\underline{y}^{p^{s+1}})$  as $h_{((n_{\xi\,(\xi<\gamma)},n_{\gamma},n_{\xi\,(\xi>\gamma)}))}(\underline{y}^{p^{s+1}})$.
 So\begin{align}
 \frac{\partial w_d}{\partial y^{p^s}_{\gamma}}=&\sum_{\substack{n_{\xi}=0\\ (\xi<\gamma)}}^{p-1}\sum_{n_{\gamma}=0}^{p-2}\sum_{\substack{n_{\xi}=0\\ (\xi>\gamma)}}^{p-1}(n_{\gamma}+1)
 \prod_{\xi<\gamma}(y_{\xi}^{p^s})^{n_{\xi}}(y_{\gamma}^{p^s})^{n_{\gamma}}\notag\\&\times \prod_{\xi>\gamma}(y_{\xi}^{p^s})^{n_{\xi}}h_{((n_{\xi\,(\xi<\gamma)},n_{\gamma}+1,n_{\xi\,(\xi>\gamma)}))}(\underline{y}^{p^{s+1}}).
 \end{align}
 Taking (5.9) and (5.10) into (5.8) yields
 $$
 w_mg_{\underline{\xi}}^{\gamma}(\underline{y}^{p^{s+1}})=(n_{\gamma}+1)h_{((n_{\xi\,(\xi<\gamma)},n_{\gamma}+1,n_{\xi\,(\xi>\gamma)}))}(\underline{y}^{p^{s+1}}).
 $$
 This shows that
 $$
 w_m\,|\,h_{n_{\underline{\xi}}}(\underline{y}^{p^{s+1}}),
 $$
 where the components of $n_{\underline{\xi}}=(n_{\xi\,(\xi<\gamma)},n_{\gamma},n_{\xi\,(\xi>\gamma)})$ are not all zeros, 
 i.e. $n_{\underline{\xi}}\neq \underline{0}$ ($\underline{0}$ is the zero vector). That is, for each $h_{n_{\underline{\xi}}}(\underline{y}^{p^{s+1}})$, there exists a corresponding $h^*_{n_{\underline{\xi}}}(\underline{y}^{p^{s+1}})$ such that
 $$
 h_{n_{\underline{\xi}}}(\underline{y}^{p^{s+1}})=w_mh^*_{n_{\underline{\xi}}}(\underline{y}^{p^{s+1}})\, \, (n_{\underline{\xi}}\neq \underline{0}).
 $$
Taking this back into (5.7), we arrive at
 $$w_d=w_m\sum_{n_{\underline{\xi}\neq \underline{0}}}\, \prod_{\xi\in\Phi^-, \Pi, \Phi^+}(y_{\xi}^{p^s})^{n_{\xi}}{h^*}_{n_{\underline{\xi}}}(\underline{y}^{p^{s+1}})+h_{(0,0,\cdots,0)}(\underline{y}^{p^{s+1}}),$$
 where the summation is taken over $n_{\underline{\xi}}$ such that the components $n_{\xi}$ varies from $0$ to $p-1$, but all the components $n_{\xi}$ can not be simultaneously
 zero. Let us put 
 $$
 u=\sum_{ n_{\underline{\xi}}\neq\underline{0}}\prod_{\ \ \xi\in\Phi^-, \Pi, \Phi^+}(y_{\xi}^{p^s})^{n_{\xi}}{h^*}_{n_{\underline{\xi}}}(\underline{y}^{p^{s+1}}),\ \ v=h_{(0,0,\cdots,0)}(\underline{y}^{p^{s+1}}).
 $$ 
 This proves that 
 $$
 w_d=w_mu+v,
 $$
 where $u\in \mathbb{F}_p[\underline{y}^{p^s}] $ and $v\in  \mathbb{F}_p[\underline{y}^{p^{s+1}}]$ 
 and Claim \ref{xxsec5.4} follows from Claim \ref{xxsec5.3}.
 \end{proof}

Now we continue the proof for the Case $w<s_m$ of Theorem \ref{xxsec5.1}. 
Let us consider the following subset of $\Omega_{G(1)}$:
$$\begin{aligned}N(w_m) =& \{\ W\ |\ W \ \text{is a nontrivial normal element  }\\&   \text{with the lowest degree term}\   w_m, s(W)= s_m-1\ \},
\end{aligned}
$$
 where $s(W)$ is the $s$ corresponding to
$W$.
For any $W \in N(w_m)$, we assume that $s(W)=s_d$  for some $d > m$. Thus one can
write $W$ as
$$W = w_m + w_{m+1} + w_{m+2}+ \cdots+ w_d + \cdots. $$
Then $w_d=w_mu+v$,
where $u\in \mathbb{F}_p[\underline{y}^{p^{s_m-1}}] $ and $v\in  \mathbb{F}_p[\underline{y}^{p^{s_m}}]$. For convenience, we denote the index of $w_d$ by $d(W)$.
Let us write $W = W_0$ and $W_1 = W(1 - u)$. Then
$$\begin{aligned}
&W_1= w_m + w_{m+1} + w_{m+2} + \cdots + (w_d  - w_mu) + (w_{d+1}  - w_{m+1}u)\\
&\ \ \ \ \ \ +(w_{d+2}- w_{m+2}u) + \cdots  + (w_{2d-m}- w_du) + \cdots.
\end{aligned}
$$
It is easy to verfiy that $W_1 \in N(w_m)$ and $d(W_0) < d(W_1)$.
Likewise, for $ W_1$,
 there exist $ u'$ and $v'$ such that $w_{1d} = w_mu'+v'$, where
$w_{1d}$ is the first homogeneous
polynomial satisfying the condition  $s(W_1)= s_m-1$ in $W_1$. Let us set $W_2 = W_1(1- u')$. It
is also easy to check that
$W_2 \in N(w_m)$ and $d(W_1) < d(W_2)$. Repeating this
process continuously, we finally construct an infinite sequence of normal elements
$$W_0 = W, \ W_1 = W(1 - u),  \ \text{and} \ W_2 = W(1 - u)(1 - u'),\  \cdots.$$
Let us set $\underset{n\rightarrow \infty}{\rm lim}\, W_n = V$. Then $V$  is a normal element with the form
$$V =  v_m + v_{m+1} +\cdots  v_{d-1} + v_{d}+ \cdots ,$$
where $v_m=w_m$. It follows that $s(V)> s_{m-1}$, a contradiction. This shows that $W$, 
as a non-zero normal element in $\Omega_{G(1)}$, 
must contain constant terms  in the  case that $s<s_m$. This completes the proof of our main 
theorem\,(Theorem \ref{xxsec5.1}) provided the Claim \ref{xxsec5.3} is true.

\end{proof}
\section{Proof of Claim \ref{xxsec5.3}}
\label{xxsec6}

We will give a detailed proof for Claim \ref{xxsec5.3} in this section. 
Let us restate Claim 5.3:  $w_m\,|\,\frac{\partial w_d}{\partial y^{p^s}_{\gamma}}$, for all $ \,\gamma\in\Phi$ or $\gamma\in\Pi$.

\begin{proof}
Let $\gamma\in\Phi$. We by the partial differential equations in Lemma \ref{xxsec4.1} get
$$[y^{p^r}_{-\gamma}, w_d]_{\circ}= \frac{\partial w_d}{\partial y_{\gamma}^{p^s}} [y_{-\gamma}^{p^s}, y_{\gamma}^{p^s}]_{\circ}+\text{other terms}.$$
Note that $[y^{p^r}_{-\gamma}, y^{p^s}_{\gamma}]_{\circ}\neq0$ by Proposition \ref{xxsec3.1}. Also note that
\begin{equation}[y^{p^r}_{-\gamma}, w_d]_{\circ}=w_m\cdot(D_{-\gamma}(r))_{\circ}.  \end{equation}So
\begin{align}
w_m\cdot(D_{-\gamma}(r))_{\circ}&=[y^{p^r}_{-\gamma}, w_d]_{\circ}=\frac{\partial w_d}{\partial y^{p^s}_{\gamma}} [y_{-\gamma}^{p^r}, y_{\gamma}^{p^s}]_{\circ}+\text{other terms}\notag\\& =\frac{\partial w_d}{\partial y^{p^s}_{\gamma}} (\sum_{i=1}^{\ell}n_iy_{\delta_i}^{p^{r+s+1}})+\text{other terms}.
\end{align}
 Let us write
\begin{equation}
(D_{\gamma}(r))_{\circ}=\sum_{i=1}^{\ell}U^{\delta_i}(\underline{y})y^{p^r}_{\delta_{i}}+\text{other terms}.
\end{equation}
Combining (6.2) with (6.3) gives
\begin{equation}
  w_m\,|\,\frac{\partial w_d}{\partial y^{p^s}_{\gamma}} y_{\delta_i}^{p^{r+s+1}-p^r}.
  \end{equation}
  Now we use again the partial differential equations in Lemma \ref{xxsec4.1}. 
  Pick any $\delta\in\Pi$,
   \begin{align}
[y^{p^r}_{\delta}, w_d]_{\circ}&=\frac{\partial w_d}{\partial y^{p^s}_{\gamma}} [y_{\delta}^{p^r}, y_{\gamma}^{p^s}]_{\circ}+\text{other terms}\notag\\& =\frac{\partial w_d}{\partial y^{p^s}_{\gamma}}\langle\gamma, \delta\rangle y_{\gamma}^{p^{r+s+1}}+\text{other terms}.
\end{align}
We also have
 \begin{equation}[y^{p^r}_{\delta}, w_d]_{\circ}=w_m\cdot(D_{\delta}(r))_{\circ}.  \end{equation}
 Similarly writing $(D_{\delta}(r))_{\circ}=U^{\gamma}(\underline{y})y^{p^r}_{\gamma}+\text{other terms}$ and using (6.5) and (6.6) we deduce that
\begin{equation}
  w_m\,|\,\frac{\partial w_d}{\partial y^{p^s}_{\gamma}} y_{\gamma}^{p^{r+s+1}}.
  \end{equation}
  Notice that from (6.4) we have $ w_m\,|\, \frac{\partial w_d}{\partial y^{p^s}_{\gamma}} y_{\delta_i}^{p^{r+s+1}-p^s}$. So,
  $ w_m\,|\,\frac{\partial w_d}{\partial y^{p^s}_{\gamma}}$. This completes the case if $\gamma\in\Phi$.

  The case when $\gamma\in \Pi$ is much more tricky and its proof will use diagram chasing using 
  Dynkin diagram of the Lie type of the group $G(1)$. Given $\Phi$ and $\Pi$, we will
  denote $\alpha_{\max}$ to be the highest root. The philosophy of the proof is to write the partial 
  differential equations in Lemma \ref{xxsec4.1} for all $y_{\alpha_i}$, where
  $\alpha_i$ is a simple root equal to $\delta_i$\,($\alpha_i=\delta_i$). Note that $y_{\alpha_i}$ 
  and $y_{\delta_i}$ are different elements. When we write $y_{\alpha_i}$, we mean the 
  element $y_{\alpha_i}=x_{\alpha_i}(p)-1$ but $y_{\delta_i}=h_{\delta_i}(1+p)-1$.
  So we will write
  $[y_{\alpha_i}, w_d]_{\circ}$, $[y_{-\alpha_i}, w_d]_{\circ}$, $[y_{-\alpha_{\max}}, w_d]_{\circ}$ and $[y_{\alpha_{\max}}, w_d]_{\circ}$. This will give us divisibility relations
  of $w_m$ which we will then solve to show that $ w_m\,|\,\frac{\partial w_d}{\partial y^{p^s}_{\delta_i}} $ for all $ i$. Let us start with the case $B_{\ell}$, which suit us best to explain the philosophy of our diagram chasing argument.

  Dynkin diagram for ${\ell}\geq 3$\,(cf. \cite[Page 214]{Bourbaki}):
  \begin{center}
\begin{tikzpicture}
\draw(1.2,-0.6)--(2,0);
\fill (1.2,-0.6) circle (1pt);
\node at(1.2,-0.9){$\alpha_{\max}$};
\draw(1.2,0.6)--(2,0);
\fill (1.2,0.6) circle (1pt);
\node at(1.2,0.9){$\delta_1$};
\draw(2,0)--(3,0);
\fill (2,0) circle (1pt);
\fill (3,0) circle (1pt);
\node at(2,0.3){$\delta_2$};
\draw(3,0)--(4,0);
\fill (4,0) circle (1pt);
\node at(3,0.3){$\delta_3$};
\node at(4,0.3){$\delta_4$};
\draw[dotted, thick](4.1,0)--(4.75,0);
\draw(4.8,0)--(5.8,0);
\fill (4.8,0) circle (1pt);
\fill (5.8,0) circle (1pt);
\node at(4.8,0.3){$\delta_{\ell-1}$};
\node at(5.8,0.3){$\delta_{\ell}$};
\end{tikzpicture}
\end{center}

Note that, by \cite[Page 207]{Bourbaki} and the discussion on Dynkin graphs, the fact 
that $\alpha_{\max}$ is linked to $\delta_2$ only means that $\langle\alpha_{\max},\delta_2\rangle \neq0$ and
$\langle\alpha_{\max},\delta_i\rangle =0$ for all $i=1,3,4,\cdots,\ell$. Let us first write the 
partial differential equation for $\alpha_{\max}$, so
\begin{equation}
w_m\cdot(D_{\alpha_{\max}}(r))_{\circ}=[y^{p^r}_{\alpha_{\max}}, w_d]_{\circ}=\frac{\partial w_d}{\partial y^{p^s}_{\delta_2}} [y^{p^r}_{\alpha_{\max}},y^{p^s}_{\delta_2}]_{\circ}
+\text{other terms}.\end{equation}
By expanding  $(D_{\alpha_{\max}}(r))_{\circ}$ and by the same method as that of 
(6.1)-(6.7)\,(noticing that $[y^{p^r}_{\alpha_{\max}}$, $y^{p^s}_{\delta_2}]_{\circ}$$=-\langle\alpha_{\max},\delta_2\rangle y_{\alpha_{\max}}^{p^{r+s+1}} $), 
we obtain $w_m\,|\,\frac{\partial w_d}{\partial y^{p^s}_{\delta_2}} y_{\alpha_{\max}}^{p^{r+s+1}-p^r}$. 
Replacing $y_{\alpha_{max}}$ in (6.8) by $y_{-\alpha_{max}}$, we get $w_m\,|\,\frac{\partial w_d}{\partial y^{p^s}_{\delta_2}} y_{-\alpha_{\max}}^{p^{r+s+1}-p^r}$. 
This implies that $w_m\,|\,\frac{\partial w_d}{\partial y^{p^s}_{\delta_2}}$.

Recall the Dynkin diagram above 
  \begin{center}
\begin{tikzpicture}
\draw(1.2,-0.6)--(2,0);
\fill (1.2,-0.6) circle (1pt);
\node at(1.2,-0.9){$\alpha_{\max}$};
\draw(1.2,0.6)--(2,0);
\fill (1.2,0.6) circle (1pt);
\node at(1.2,0.9){$\delta_1$};
\draw(2,0)--(3,0);
\fill (2,0) circle (1pt);
\fill (3,0) circle (1pt);
\node at(2,0.3){$\delta_2$};
\draw(3,0)--(4,0);
\fill (4,0) circle (1pt);
\node at(3,0.3){$\delta_3$};
\node at(4,0.3){$\delta_4$};
\draw[dotted, thick](4.1,0)--(4.75,0);
\draw(4.8,0)--(5.8,0);
\fill (4.8,0) circle (1pt);
\fill (5.8,0) circle (1pt);
\node at(4.8,0.3){$\delta_{\ell-1}$};
\node at(5.8,0.3){$\delta_{\ell}$};
\end{tikzpicture}
\end{center}
Notice that writing the partial differential equations 
for $[y^{p^r}_{\alpha_{\max}}, w_d]_{\circ}$ and $[y^{p^r}_{-\alpha_{\max}}, w_d]_{\circ}$ gives 
us $w_m\,|\,\frac{\partial w_d}{\partial y^{p^s}_{\delta_2}}$\,(the main fact is  that $\alpha_{\max}$ 
is only linked with $\delta_2$).

Now write the partial differential equations $[y^{p^r}_{\alpha_1}, w_d]_{\circ}$ and $[y^{p^r}_{-\alpha_1}, w_d]_{\circ}$, 
where $\alpha_1=\delta_1$,
$y_{\alpha_1}=x_{\alpha_1}(p)-1$ is not the same as $y_{\delta_1}=h_{\delta_1}(1+p)-1$. 
Since $\delta_1$ is only connected with $\delta_2$, we will arrive at  $w_m\,|\,\frac{\partial w_d}{\partial y^{p^s}_{\delta_1}}$. 
We clarify this below for the convenience of the reader.
 \begin{align*}w_m\cdot(D_{\alpha_1}(r))_{\circ}&=[y^{p^r}_{\alpha_1}, w_d]_{\circ}=\frac{\partial w_d}{\partial y^{p^s}_{\delta_1}} [y^{p^r}_{\alpha_1},y^{p^s}_{\delta_1}]_{\circ}+
\frac{\partial w_d}{\partial y^{p^s}_{\delta_2}} [y^{p^r}_{\alpha_1},y^{p^s}_{\delta_2}]_{\circ}+\text{other terms}\\
&=-\frac{\partial w_d}{\partial y^{p^s}_{\delta_1}} \langle\alpha_1,\delta_1\rangle y_{\alpha_1}^{p^{r+s+1}}-\langle\alpha_1,\delta_2\rangle\frac{\partial w_d}{\partial y^{p^s}_{\delta_2}} y_{\alpha_1}^{p^{r+s+1}}
+\text{other terms}. \end{align*}
Thus we get $$w_m\,|\,(\frac{\partial w_d}{\partial y^{p^s}_{\delta_1}}\langle\alpha_1,\delta_1\rangle +\frac{\partial w_d}{\partial y^{p^s}_{\delta_2}}\langle\alpha_1,\delta_2\rangle  ) y_{\alpha_1}^{p^{r+s+1}}.$$
Replacing $\alpha_1$ by $-\alpha_1$ yields 
$$w_m\,|\,(\frac{\partial w_d}{\partial y^{p^s}_{\delta_1}}\langle\alpha_1,\delta_1\rangle +\frac{\partial w_d}{\partial y^{p^s}_{\delta_2}}\langle\alpha_1,\delta_2\rangle  ) y_{-\alpha_1}^{p^{r+s+1}}.$$
As $\gcd (y_{\alpha_1},y_{-\alpha_1})=1$, we have
$$w_m\,|\,(\frac{\partial w_d}{\partial y^{p^s}_{\delta_1}}\langle\alpha_1,\delta_1\rangle +\frac{\partial w_d}{\partial y^{p^s}_{\delta_2}}\langle\alpha_1,\delta_2\rangle  ) .$$
Now we have already shown that  $w_m\,|\,\frac{\partial w_d}{\partial y^{p^s}_{\delta_2}}$. This gives that  $w_m\,|\,\frac{\partial w_d}{\partial y^{p^s}_{\delta_1}}$ as $\langle\alpha_1,\delta_1\rangle =\langle\delta_1,\delta_1\rangle\neq0 $\,(by \cite{{Bourbaki}}). We proceed this diagram chasing for other simple roots in the Dynkin diagram of $B_{\ell}$.

Now writing $[y^{p^r}_{\alpha_2}, w_d]_{\circ}$  and  $[y^{p^r}_{-\alpha_2}, w_d]_{\circ}$ for $\alpha_2=\delta_2$, 
we get
$$
w_m\,|\, (\frac{\partial w_d}{\partial y^{p^s}_{\delta_1}}\langle\delta_2,\delta_1\rangle +\frac{\partial w_d}{\partial y^{p^s}_{\delta_2}}\langle\delta_2,\delta_2\rangle
+\frac{\partial w_d}{\partial y^{p^s}_{\delta_3}}\langle\delta_2,\delta_3\rangle),
$$
which is due to the fact that $\delta_2$ is only connected with $\delta_1$ and $\delta_3$. Since 
we have already shown that $w_m|\frac{\partial w_d}{\partial y^{p^s}_{\delta_1}}$ and 
$w_m|\frac{\partial w_d}{\partial y^{p^s}_{\delta_2}}$, we will obtain $w_m|\frac{\partial w_d}{\partial y^{p^s}_{\delta_3}}$. 
We proceed this to complete and finally we conclude that
$w_m\,|\, \frac{\partial w_d}{\partial y^{p^s}_{\delta_i}}$, $i\in[1,\ell]$. The above 
argument also works for types
 \begin{center}
\begin{tikzpicture}
\draw(0,0)--(1,0);
\fill (0,0) circle (1pt);
\node at(0,-0.3){$\alpha_{\max}$};
\node at(-1.5,0){$C_{\ell}$};
\draw(1,0)--(2,0);
\fill (1,0) circle (1pt);
\node at(1.0,-0.3){$\delta_1$};
\draw(2,0)--(3,0);
\fill (2,0) circle (1pt);
\fill (3,0) circle (1pt);
\node at(2,-0.3){$\delta_2$};
\draw(3,0)--(4,0);
\fill (4,0) circle (1pt);
\node at(3,-0.3){$\delta_3$};
\node at(4,-0.3){$\delta_4$};
\draw[dotted, thick](4.1,0)--(4.75,0);
\draw(4.8,0)--(5.8,0);
\fill (4.8,0) circle (1pt);
\fill (5.8,0) circle (1pt);
\node at(4.8,-0.3){$\delta_{\ell-1}$};
\node at(5.8,-0.3){$\delta_{\ell}$};
\end{tikzpicture}
\end{center}
\begin{center}
\begin{tikzpicture}
\draw(0,0)--(1,0);
\fill (0,0) circle (1pt);
\node at(0,-0.3){$\alpha_{\max}$};
\node at(-1.5,0){$F_4$};
\draw(1,0)--(2,0);
\fill (1,0) circle (1pt);
\node at(1.0,-0.3){$\delta_1$};
\draw(2,0)--(3,0);
\fill (2,0) circle (1pt);
\fill (3,0) circle (1pt);
\node at(2,-0.3){$\delta_2$};
\draw(3,0)--(4,0);
\fill (4,0) circle (1pt);
\node at(3,-0.3){$\delta_3$};
\node at(4,-0.3){$\delta_4$};
\node at(5.8,-0.3){ };
\end{tikzpicture}
\end{center}
\begin{center}
\begin{tikzpicture}
\node at(-0.2,0){$D_{\ell}$};
\draw(1.2,-0.6)--(2,0);
\fill (1.2,-0.6) circle (1pt);
\node at(1.2,-0.9){$\alpha_{\max}$};
\draw(1.2,0.6)--(2,0);
\fill (1.2,0.6) circle (1pt);
\node at(1.2,0.9){$\delta_1$};
\draw(2,0)--(3,0);
\fill (2,0) circle (1pt);
\fill (3,0) circle (1pt);
\node at(2,0.3){$\delta_2$};
\draw(3,0)--(4,0);
\fill (4,0) circle (1pt);
\node at(3,0.3){$\delta_3$};
\node at(4,0.3){$\delta_4$};
\draw[dotted, thick](4.1,0)--(4.75,0);
\draw(4.8,0)--(5.8,0);
\fill (4.8,0) circle (1pt);
\fill (5.8,0) circle (1pt);
\node at(4.8,0.3){$\delta_{\ell-3}$};
\node at(5.8,0.3){$\delta_{\ell-2}$};
\draw(5.8,0)--(6.6,0.6);
\fill (6.6,0.6) circle (1pt);
\node at(6.6,-0.9){$\delta_{\ell}$};
\draw(5.8,0)--(6.6,-0.6);
\fill (6.6,-0.6) circle (1pt);
\node at(6.6,0.9){$\delta_{\ell-1}$};
\end{tikzpicture}
\end{center}

For $D_{\ell}$, just as in the case for $B_{\ell}$, we start from $\alpha_{\max}$ and do diagram chasing until we get $w_m\,|\, \frac{\partial w_d}{\partial y^{p^s}_{\delta_{\ell-2}}}$ from the equations $$w_m\cdot(D_{\alpha_{\ell-3}}(r))_{\circ}=[y^{p^r}_{\alpha_{\ell-3}}, w_d]_{\circ}$$
and
$$w_m\cdot(D_{-\alpha_{\ell-3}}(r))_{\circ}=[y^{p^r}_{-\alpha_{\ell-3}}, w_d]_{\circ},\  \alpha_{\ell-3}=\delta_{\ell-3}.$$
Finally, by invoking the relations
$$w_m\cdot(D_{\alpha_{\ell}}(r))_{\circ}=[y^{p^r}_{\alpha_{\ell}}, w_d]_{\circ} \ \text{and}\
w_m\cdot(D_{-\alpha_{\ell}}(r))_{\circ}=[y^{p^r}_{-\alpha_{\ell}}, w_d]_{\circ},$$
we arrive at $w_m\,|\, \frac{\partial w_d}{\partial y^{p^s}_{\delta_{\ell}}}$ . Similarly, using 
the relations
$$
w_m\cdot(D_{\alpha_{\ell-1}}(r))_{\circ}=[y^{p^r}_{\alpha_{\ell-1}}, w_d]_{\circ} \ \text{and}\
w_m\cdot(D_{-\alpha_{\ell-1}}(r))_{\circ}=[y^{p^r}_{-\alpha_{\ell-1}}, w_d]_{\circ},
$$ 
we obtain
 $w_m\,|\, \frac{\partial w_d}{\partial y^{p^s}_{\delta_{\ell-1}}}$.

Here is the Dynkin diagram for $E_8$.
\begin{center}
\begin{tikzpicture}
\draw(0,0)--(1,0);
\fill (0,0) circle (1pt);
\node at(0,0.3){$\delta_1$};
\node at(-1.5,0){$E_8$};
\draw(1,0)--(2,0);
\fill (1,0) circle (1pt);
\node at(1.0,0.3){$\delta_3$};
\draw(2,0)--(3,0);
\fill (2,0) circle (1pt);
\fill (3,0) circle (1pt);
\node at(2,0.3){$\delta_4$};
\draw(2,0)--(2,-1);
\fill (2,-1) circle (1pt);
\node at(2,-1.3){$\delta_2$};
\draw(3,0)--(4,0);
\fill (4,0) circle (1pt);
\node at(3,0.3){$\delta_5$};
\node at(4,0.3){$\delta_6$};
\draw(4,0)--(5,0);
\fill (5,0) circle (1pt);
\node at(5,0.3){$\delta_7$};
\draw(5,0)--(6,0);
\fill (6,0) circle (1pt);
\node at(6,0.3){$\delta_8$};
\draw(6,0)--(7,0);
\fill (7,0) circle (1pt);
\node at(7,-0.3){$\alpha_{\max}$};
\end{tikzpicture}
\end{center}
Case $E_8$ is also easy to see, we adopt the same trick as we did for the case of $D_{\ell}$.

Here is the Dynkin diagram for $E_7$.
\begin{center}
\begin{tikzpicture}
\draw(0,0)--(1,0);
\fill (0,0) circle (1pt);
\node at(0,0.3){$\alpha_{\max}$};
\node at(-1.5,0){$E_7$};
\draw(1,0)--(2,0);
\fill (1,0) circle (1pt);
\node at(1.0,0.3){$\delta_1$};
\draw(2,0)--(3,0);
\fill (2,0) circle (1pt);
\fill (3,0) circle (1pt);
\node at(2,0.3){$\delta_3$};
\draw(3,0)--(3,-1);
\fill (3,-1) circle (1pt);
\node at(3,-1.3){$\delta_2$};
\draw(3,0)--(4,0);
\fill (4,0) circle (1pt);
\node at(3,0.3){$\delta_4$};
\node at(4,0.3){$\delta_5$};
\draw(4,0)--(5,0);
\fill (5,0) circle (1pt);
\node at(5,0.3){$\delta_6$};
\draw(5,0)--(6,0);
\fill (6,0) circle (1pt);
\node at(6,0.3){$\delta_7$};
\end{tikzpicture}
\end{center}
Case $E_7$ is also similar to $E_8$ and $D_{\ell}$ and we do the same diagram chasing by using the partial differential equations as we did earlier.

Here is the Dynkin diagram for $E_6$.
\begin{center}
\begin{tikzpicture}
\node at(-1.5,0){$E_6$};
\draw(1,0)--(2,0);
\fill (1,0) circle (1pt);
\node at(1.0,-0.3){$\delta_1$};
\draw(2,0)--(3,0);
\fill (2,0) circle (1pt);
\fill (3,0) circle (1pt);
\node at(2,-0.3){$\delta_3$};
\draw(3,0)--(3,-1);
\fill (3,-1) circle (1pt);
\node at(3.3,-1){$\delta_2$};
\draw(3,-1)--(3,-2);
\fill (3,-2) circle (1pt);
\node at(3,-2.3){$\alpha_{\max}$};
\draw(3,0)--(4,0);
\fill (4,0) circle (1pt);
\node at(3,0.3){$\delta_4$};
\node at(4,0.3){$\delta_5$};
\draw(4,0)--(5,0);
\fill (5,0) circle (1pt);
\node at(5,0.3){$\delta_6$};
\end{tikzpicture}
\end{center}
This $E_6$ case is a little tricky and we will demonstrate it in a detailed way. In 
view of the facts
$$w_m\cdot(D_{\alpha_{\max}}(r))_{\circ}=[y^{p^r}_{\alpha_{\max}}, w_d]_{\circ} 
$$
and
$$
w_m\cdot(D_{-\alpha_{\max}}(r))_{\circ}=[y^{p^r}_{-\alpha_{\max}}, w_d]_{\circ},
$$
we observe that $w_m\,|\, \frac{\partial w_d}{\partial y^{p^s}_{\delta_2}}$. Taking
into account the relations
$$
w_m\cdot(D_{\alpha_2}(r))_{\circ}=[y^{p^r}_{\alpha_2}, w_d]_{\circ}
$$
and 
$$
w_m\cdot(D_{-\alpha_2}(r))_{\circ}=[y^{p^r}_{-\alpha_2}, w_d]_{\circ},
$$
we obtain $w_m\,|\, \frac{\partial w_d}{\partial y^{p^s}_{\delta_4}}$\,(because $\delta_2$ is linked with $\delta_4$).
Now we write, for $\alpha_3=\delta_3$, 
$$
w_m\cdot(D_{\alpha_3}(r))_{\circ}=[y^{p^r}_{\alpha_3}, w_d]_{\circ} \ \text{and}\
w_m\cdot(D_{-\alpha_3}(r))_{\circ}=[y^{p^r}_{-\alpha_3}, w_d]_{\circ},$$
and these will give us
$$w_m\,|\, (\frac{\partial w_d}{\partial y^{p^s}_{\delta_1}}\langle\delta_3,\delta_1\rangle +\frac{\partial w_d}{\partial y^{p^s}_{\delta_3}}\langle\delta_3,\delta_3\rangle
+\frac{\partial w_d}{\partial y^{p^s}_{\delta_4}}\langle\delta_3,\delta_4\rangle   ) .$$
Applying the fact $w_m\,|\, \frac{\partial w_d}{\partial y^{p^s}_{\delta_4}}$ yields that 
\begin{equation}w_m\,|\, (\frac{\partial w_d}{\partial y^{p^s}_{\delta_1}}\langle\delta_3,\delta_1\rangle +2\frac{\partial w_d}{\partial y^{p^s}_{\delta_3}}).\end{equation}
It follows from the relations
 $$w_m\cdot(D_{\alpha_1}(r))_{\circ}=[y^{p^r}_{\alpha_1}, w_d]_{\circ} \ \text{and}\
w_m\cdot(D_{-\alpha_1}(r))_{\circ}=[y^{p^r}_{-\alpha_1}, w_d]_{\circ}
$$ 
that
$$w_m\,|\, (\frac{\partial w_d}{\partial y^{p^s}_{\delta_1}}\langle\delta_1,\delta_1\rangle 
+\frac{\partial w_d}{\partial y^{p^s}_{\delta_3}}\langle\delta_1,\delta_3\rangle).$$
This implies that
\begin{equation}
w_m\,|\, (2\frac{\partial w_d}{\partial y^{p^s}_{\delta_1}} +\frac{\partial w_d}{\partial y^{p^s}_{\delta_3}}\langle\delta_1,\delta_3\rangle).
\end{equation}
Now by \cite[Page 5]{Steinberg}, we know that
$$\langle\delta_3,\delta_1\rangle\in\{\pm 1\},\
\langle\delta_1,\delta_3\rangle
=2\frac{(\delta_1\,|\, \delta_3)}{(\delta_3\,|\,\delta_3)}=(\delta_1\,|\, \delta_3)\ \text{and}\
(x\,|\, y)=(y\,|\, x).
$$
So $\langle\delta_1,\delta_3\rangle=\langle\delta_3,\delta_1\rangle$. 
We therefore assume that $\langle\delta_1,\delta_3\rangle=+1$. By the relation (6.9) we see that
$$
w_m\,|\, (2\frac{\partial w_d}{\partial y^{p^s}_{\delta_1}}+\frac{\partial w_d}{\partial y^{p^s}_{\delta_3}}).
$$
Let us set $x=\frac{\partial w_d}{\partial y^{p^s}_{\delta_3}}$ and $y=\frac{\partial w_d}{\partial y^{p^s}_{\delta_1}}$. 
Thus $w_m\,|\, (2x+y)$ and $w_m\,|\, (2y+x)$. And hence
$w_m\,|\, [2(2x+y)-(2y+x)=3x]$. Therefore, 
$w_m\,|\, x$ and then from (6.10) we get  $w_m\,|\, y$. So  $w_m\,|\, \frac{\partial w_d}{\partial y^{p^s}_{\delta_1}}$ 
and  $w_m\,|\, \frac{\partial w_d}{\partial y^{p^s}_{\delta_3}}$. Similarly, using the  equations
\begin{align*}w_m\cdot(D_{\alpha_5}(r))_{\circ}&=[y^{p^r}_{\alpha_5}, w_d]_{\circ}\,(\alpha_5=\delta_5) ,\\
w_m\cdot(D_{\alpha_6}(r))_{\circ}&=[y^{p^r}_{\alpha_6}, w_d]_{\circ},\,(\alpha_6=\delta_6),\end{align*}
and solving them we get $w_m\,|\, \frac{\partial w_d}{\partial y^{p^s}_{\delta_5}}$ and  $w_m\,|\, \frac{\partial w_d}{\partial y^{p^s}_{\delta_6}}$. 
This completes the proof of case $E_6$.

Here is the Dynkin diagram for $G_2$.
\begin{center}
\begin{tikzpicture}
\node at(-1.5,0){$G_2$};
\draw(1,0)--(2,0);
\fill (1,0) circle (1pt);
\node at(1.0,-0.3){$\delta_1$};
\draw(2,0)--(3,0);
\fill (2,0) circle (1pt);
\fill (3,0) circle (1pt);
\node at(2,-0.3){$\delta_2$};
\node at(3,-0.3){$\alpha_{\max}$};

\end{tikzpicture}
\end{center}
This case is easy and similar to the cases of $B_{\ell}$ and $D_{\ell}$. 

Let us next deal with the case of $A_{\ell}$. 
The Dynkin diagram for $A_{\ell}$ is  the following
 \begin{center}
\begin{tikzpicture}
\node at(0.5,0.5){$A_{\ell}$};
\draw(4.2,1.3)--(2,0);
\fill (4.2,1.3) circle (1pt);
\node at(4.5,1.4){$\alpha_{\max}$};
\draw(4.2,1.3)--(6.8,0);
\draw(2,0)--(3,0);
\fill (2,0) circle (1pt);
\fill (3,0) circle (1pt);
\node at(2,0.3){$\delta_1$};
\draw(3,0)--(4,0);
\fill (4,0) circle (1pt);
\node at(3,0.3){$\delta_2$};
\node at(4,0.3){$\delta_3$};
\draw[dotted, thick](4.1,0)--(4.75,0);
\draw(4.8,0)--(5.8,0);
\draw(5.8,0)--(6.8,0);
\fill (4.8,0) circle (1pt);
\fill (5.8,0) circle (1pt);
\fill (6.8,0) circle (1pt);
\node at(5.8,0.3){$\delta_{\ell-1}$};
\node at(6.8,0.3){$\delta_{\ell}$};
\end{tikzpicture}
\end{center}
Let us write $x_i=\frac{\partial w_d}{\partial y^{p^s}_{\delta_i}}$, $\alpha_i=\delta_i$. Then 
the following two relations 
 $$w_m\cdot(D_{\alpha_1}(r))_{\circ}=[y^{p^r}_{\alpha_1}, w_d]_{\circ} \ \text{and}\
w_m\cdot(D_{-\alpha_1}(r))_{\circ}=[y^{p^r}_{-\alpha_1}, w_d]_{\circ}$$ give
$$w_m\,|\, (-\langle\alpha_1,\delta_1\rangle \frac{\partial w_d}{\partial y^{p^s}_{\delta_1}}-\langle\alpha_1,\delta_2\rangle \frac{\partial w_d}{\partial y^{p^s}_{\delta_2}} ),$$
which is $w_m\,|\, (-2 \frac{\partial w_d}{\partial y^{p^s}_{\delta_1}}+\frac{\partial w_d}{\partial y^{p^s}_{\delta_2}})$ due to \cite[Page 217]{Bourbaki}. 
Thus we have 
\begin{equation}
w_m\,|\, (-2x_1+x_2).
\end{equation}
Similarly, writing
$$w_m\cdot(D_{\alpha_2}(r))_{\circ}=[y^{p^r}_{\alpha_2}, w_d]_{\circ} \ \text{and}\
w_m\cdot(D_{-\alpha_2}(r))_{\circ}=[y^{p^r}_{-\alpha_2}, w_d]_{\circ},$$
we get
$$w_m\,|\, (-\langle\alpha_2,\alpha_1\rangle\frac{\partial w_d}{\partial y^{p^s}_{\delta_1}}-\langle\alpha_2,\alpha_2\rangle\frac{\partial w_d}{\partial y^{p^s}_{\delta_2}}
-\langle\alpha_2,\alpha_3\rangle\frac{\partial w_d}{\partial y^{p^s}_{\delta_3}} ) ,$$
ie. \begin{equation}
w_m\,|\, (x_1-2x_2+x_3).
\end{equation}
Writing the other partial differential equations, we finally arrive at
\begin{align*}
&w_m\,|\, (-2x_1+x_2),\\
&w_m\,|\, (x_1-2x_2+x_3),\\
&w_m\,|\, (x_2-2x_3+x_4)=C_2\,(\text{say}),\\
&w_m\,|\, (x_3-2x_4+x_5)=C_3,\\
& \ \ \ \ \ \ \ \ \ \ \vdots\\
&w_m\,|\, (x_{\ell-2}-2x_{\ell-1}+x_{\ell} =C_{\ell-2}),\\
&w_m\,|\, (x_{\ell-1}-2x_{\ell}=C_{\ell-1}),\\
&w_m\,|\, (x_1+x_{\ell} =C_{\ell})\,(\text{this is obtained by using}\, \, a_{max} ).\\
\end{align*}
Now note that there exist constants $d_1,d_2, \cdots, d_{\ell-1}$ such that
$\sum_{i=2}^{\ell-1}d_iC_i+d_1C_1$ is of the form $ax_1+x_2$, where $d_i$ for $i\in[1,\ell-1]$ and $a$ are positive constants.  
We therefore conclude that there exists a positive constant $a$ such that $w_m\,|\, (ax_1+x_2)$. Now, 
$w_m\,|\, (-2x_1+x_2))$, and so $w_m\,|\, [(a+2)x_1=(ax_1+x_2)-(-2x_1+x_2)]$. This shows that $w_m\,|\, x_1$. 
Now recursively, using the relations after (6.12), we eventually arrive at $w_m\,|\, x_i$ for all $i$. This completes the proof of Claim \ref{xxsec5.3}.
\end{proof}

\subsection{Applications to center}\label{xxsec7}

Recall that $G$ is a semi-simple, simply connected, split Chevalley group 
over $\mathbb{Z}_p$, $G(1)$ is the first congruence kernel of $G$ and $\Omega_{G(1)}$ 
is the mod-$p$ Iwasawa algebra of $G(1)$ over $\mathbb{F}_p$.

As a direct consequence of Theorem \ref{xxsec5.1}, we have

\begin{proposition}\label{xxsec7.1}
Let $p$ be a prime with $p\geq 3$ and $G$ be a semi-simple, simply connected, split Chevalley group 
over $\mathbb{Z}_p$. Suppose that $G$ is one of the following Chevalley groups of Lie type: 
$A_{\ell}\, (\ell\geq 1), B_{\ell}\,(\ell\geq 2), C_{\ell}\,(\ell\geq 2), D_{\ell}\,(\ell\geq 3), E_6, E_7, E_8, F_4, G_2$.
Then the center of $\Omega_{G(1)}$ is trivial, i.e. if $r$ is central element of $\Omega_{G(1)}$, then $r\in \mathbb{F}_p$.
\end{proposition}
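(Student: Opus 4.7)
The plan is to deduce Proposition \ref{xxsec7.1} as a short consequence of Theorem \ref{xxsec5.1}. The key observation is that every central element is automatically normal: if $r \in \Omega_{G(1)}$ lies in the center, then for any $s \in \Omega_{G(1)}$ we have $rs = sr$, so trivially $r\Omega_{G(1)} = \Omega_{G(1)}r$. Thus the set of central elements sits inside the set of normal elements, and Theorem \ref{xxsec5.1} applies to it.

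Let $r \in Z(\Omega_{G(1)})$ be central. I would split off its constant term by writing $r = c + r'$, where $c \in \mathbb{F}_p$ is the constant term of $r$ viewed as a formal power series in the Lazard generators $y_\alpha, y_\delta$, and $r' \in \Omega_{G(1)}$ has no constant term. Since $c \in \mathbb{F}_p$ is itself central, the difference $r - c = r'$ is again a central element, hence again normal.

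Now I would apply Theorem \ref{xxsec5.1} to $r'$. If $r' \neq 0$, then being a nonzero normal element of $\Omega_{G(1)}$, it must contain a constant term; but by construction $r'$ has constant term zero, a contradiction. Therefore $r' = 0$ and $r = c \in \mathbb{F}_p$, which proves that $Z(\Omega_{G(1)}) = \mathbb{F}_p$.

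There is really no obstacle here beyond the statement of Theorem \ref{xxsec5.1}; the only thing worth writing carefully is the trivial inclusion $Z(\Omega_{G(1)}) \subseteq \{\text{normal elements of } \Omega_{G(1)}\}$, together with the remark that $\mathbb{F}_p \subseteq Z(\Omega_{G(1)})$ so that subtracting the constant term preserves centrality. The whole argument fits in a few lines and does not require revisiting the Chevalley-type case analysis of Section \ref{xxsec6}.
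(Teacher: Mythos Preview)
Your proposal is correct and follows essentially the same approach as the paper's own proof: both subtract off the constant term to obtain a central (hence normal) element with no constant term, and then invoke Theorem~\ref{xxsec5.1} to conclude it must vanish. The paper phrases this via the homogeneous decomposition $r = r_0 + r_1 + r_2 + \cdots$ and the commutator relation $[x, r_1 + r_2 + \cdots] = 0$, but the logic is identical to yours.
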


\begin{proof}
Suppose that $r$ is a central element of $\Omega_{G(1)}$. Then by Theorem \ref{xxsec5.1}, we can write
$$r=r_0+r_1 + r_2 + \cdots+ r_d + \cdots,
$$
where $r_d\, (d=0, 1, 2, \cdots)$ are homogeneous polynomials with respect
to $y_{12}$, $y_{13}$, $\cdots$, $y_{1122}$, $\cdots$, $y_{n(n-1)}$ of degree $d$.

Since $r$ is a central element, we have $[x,r]=0 $
for all $x\in\Omega_{G(1)}$. This implies
$$
[x,r_1 + r_2 + \cdots+ r_d + \cdots]=0
$$
for all $x\in\Omega_{G(1)}$.  By  Theorem \ref{xxsec5.1} again, we assert that
$$
r_1 + r_2 + \cdots+ r_d + \cdots=0,
$$
and the result follows.
\end{proof}

We therefore say that the center of $\Omega_{G(1)}$ is exactly the finite field 
$\mathbb{F}_p$. This accounts to reproving Ardakov's result \cite[Corollary A]{Ardakov}.

\subsection{Future questions}
\label{applications nonuniform}

Clozel \cite{Clozel2} considered the Iwasawa algebra of the pro-$p$ 
Iwahori subgroup of ${\rm GL}_2(L)$ for an unramified extension $L$ of degree $r$ of $\mathbb{Q}_p$ and 
gave a presentation of it by generators and relations. Inspired by Clozel's systematic works, Ray \cite{Ray3}
extend his result to determine the explicit ring-theoretic presentation, in the form of generators and relations, of the 
Iwasawa algebra of the pro-$p$ Iwahori subgroup of ${\rm GL}_n(\mathbb{Z}_p)$.

Clozel's and Ray's works show that there is considerable interest towards understanding 
the structure of Iwasawa algebras over the pro-$p$ Iwahori subgroup $G$ of ${\rm GL}_n(\mathbb{Z}_p)$. 
Moreover, Bushnell and Henniart \cite[Chapter 4, Section 17]{BushnellHenniart} together with Herzig \cite[Lemma 10]{Herzig} 
have pointed out that for the pro-$p$ Iwahori subgroup $G$ of ${\rm GL}_2(\mathbb{Q}_p)$ and any nonzero 
irreducible (resp. smooth) representation $V$,  the class of irreducible (resp. smooth) representation $(\pi, V)$ of ${\rm GL}_2(\mathbb{Q}_p)$ for which 
the set of $G$-fixed vectors $V^G\neq 0$ is particularly subtle and useful in mod-$p$ 
representation theory of $p$-adic groups. Furthermore, the pro-$p$ Iwahori and its associated Hecke algebra have several applications in the emerging Langlands program (see the works of M. F. Vigneras). Therefore a natural question is to understand the pro-$p$ Iwahori subgroups and their associated Iwasawa algebra.

Let $L/\mathbb{Q}_p$ be a finite extension and $e$ its ramification index. Suppose that $L$ 
is \textit{mildly ramified}, i.e.
$$
ne<p-1,
$$
see \cite[Chapter III, 3.2]{Lazard}. Let $\mathfrak{p}$, $\mathcal{O}= \mathcal{O}_L$ denote 
the prime ideal, the integers of $L$, respectively. 

We denote by $G$ the pro-$p$ Iwahori subgroup of ${\rm GL}_2(\mathbb{Z}_p)$, i.e.
$$
G=\left\{ g\in {\rm GL}_2(\mathbb{Z}_p) \  \vline \  g \equiv  \left[
\begin{array}
[c]{cc}%
1 & \ast \\
0 & 1 \\
\end{array}
\right]\ [p]  \right\}.
$$

\begin{proposition}{\rm \cite[Chapter III (3.2.7)]{Lazard}}\label{xxsec7.3}
Let $p>3$, then the pro-$p$ Iwahori subgroup $G$ of ${\rm GL}_2(\mathbb{Z}_p)$ is a $p$-valued $p$-saturated Sylow 
subgroup in the sense of Lazard.
\end{proposition}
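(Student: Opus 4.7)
The plan is to construct an explicit $p$-valuation on $G$ adapted to the Iwahori structure, verify Lazard's four axioms, and then deduce $p$-saturation and the Sylow property. Since Lazard's book treats this in full generality, the task is essentially to unwind the mildly ramified hypothesis $ne < p-1$ — which here becomes $p > 3$ — in the concrete case of the pro-$p$ Iwahori of $\mathrm{GL}_2(\mathbb{Z}_p)$ and check that the usual $\log$/$\exp$ correspondence with a Lie sublattice of $\mathfrak{gl}_2(\mathbb{Q}_p)$ works.

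First, I would write every $g \in G$ as $g = I + M$ with $M = \left(\begin{smallmatrix} pa & b \\ pc & pd \end{smallmatrix}\right)$ for $a,b,c,d \in \mathbb{Z}_p$, and introduce the $\mathbb{Z}_p$-Lie sublattice $\mathfrak{g} \subset \mathfrak{gl}_2(\mathbb{Q}_p)$ spanned by $pE_{11}, E_{12}, pE_{21}, pE_{22}$ (the Iwahori Lie algebra). Let $v$ denote the valuation on $\mathfrak{g}$ given by the minimum of the $p$-adic valuations of the coordinates in this basis. Since $p > 2$, the series for $\log(I+M)$ converges on $G$ and lands in $\mathfrak{g}$ (and conversely $\exp$ goes back), allowing me to define $\omega(g) := v(\log g) + \epsilon$ for a shift $\epsilon$ chosen so that $\omega(g) > \tfrac{1}{p-1}$ for all $g \neq 1$. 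Axioms (i)--(iii) of a $p$-valuation — positivity with lower bound $\tfrac{1}{p-1}$, the ultrametric inequality $\omega(gh^{-1}) \geq \min(\omega(g),\omega(h))$, and the commutator bound $\omega([g,h]) \geq \omega(g)+\omega(h)$ — then follow from standard Baker-Campbell-Hausdorff estimates together with the fact that $[\mathfrak{g},\mathfrak{g}] \subset \mathfrak{g}$ with a gain of $1$ in the shifted valuation.

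The hard part will be axiom (iv), $\omega(g^p) = \omega(g) + 1$, because the entry $b \in \mathbb{Z}_p$ of the upper-right corner carries no $p$-adic depth of its own, and $M^k$ for $k \geq 2$ mixes the diagonal and off-diagonal weights in a subtle way. Writing $g = \exp(X)$ one has $g^p = \exp(pX)$, and I would prove $\omega(g^p)=\omega(g)+1$ by bounding the non-linear terms in $(I+M)^p = \sum_{k=0}^{p}\binom{p}{k}M^k$: the mild ramification condition $ne<p-1$ specializes here to $2<p-1$, i.e.\ $p>3$, and is precisely what guarantees that $v(M^k) - v(pM) \geq 0$ for every $k$, so that the linear term $pM$ dominates and $\log$/$\exp$ intertwines the group power map $g \mapsto g^p$ with the Lie algebra map $X \mapsto pX$. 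For $p$-saturation, the recipe is the standard one: if $\omega(g) > \tfrac{p}{p-1}$, then $X=\log g$ satisfies $v(X/p) > \tfrac{1}{p-1}$, so $g' := \exp(X/p)$ converges, lies in $G$, and satisfies $(g')^p = g$.

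Finally, to see that $G$ is a pro-$p$ Sylow subgroup of $\mathrm{GL}_2(\mathbb{Z}_p)$, I would observe that $G$ is the preimage under reduction mod $p$ of the upper unitriangular subgroup $U \subset \mathrm{GL}_2(\mathbb{F}_p)$, giving
\[
[\mathrm{GL}_2(\mathbb{Z}_p):G] \;=\; |\mathrm{GL}_2(\mathbb{F}_p)|/|U| \;=\; (p^2-1)(p^2-p)/p \;=\; (p-1)^2(p+1),
\]
which is coprime to $p$. Since $G$ is itself a pro-$p$ group (indeed, the $p$-valuation above certifies this), it is a Sylow pro-$p$ subgroup in the sense of Lazard.
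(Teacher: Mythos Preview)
The paper itself does not prove this proposition; it simply quotes it from Lazard \cite[III (3.2.7)]{Lazard}. So there is nothing to compare against directly, and the question is whether your sketch stands on its own.

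Your overall strategy---pass to the Iwahori Lie lattice $\mathfrak{g}=\mathbb{Z}_p\langle pE_{11},E_{12},pE_{21},pE_{22}\rangle$, use $\log/\exp$, and read off Lazard's axioms---is the right one. But the specific valuation you propose does not work. You take $v$ to be the lattice valuation on $\mathfrak{g}$ (minimum of the $p$-adic valuations of the coordinates in your basis) and then set $\omega(g)=v(\log g)+\epsilon$. The trouble is the commutator axiom $\omega([g,h])\ge\omega(g)+\omega(h)$: via BCH it reduces to $v([X,Y])\ge v(X)+v(Y)+\epsilon$, i.e.\ you need a genuine \emph{gain} of $\epsilon$ on the Lie side. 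You assert ``$[\mathfrak{g},\mathfrak{g}]\subset\mathfrak{g}$ with a gain of $1$ in the shifted valuation,'' but this is false: $[E_{12},pE_{21}]=pE_{11}-pE_{22}$ has coordinates $(1,0,0,-1)$ in your basis, so $v([E_{12},pE_{21}])=0=v(E_{12})+v(pE_{21})$, with no gain at all. No positive $\epsilon$ can then satisfy both axiom (i) and axiom (iii). The same issue infects your argument for axiom (iv): with $M=E_{12}+pE_{21}$ one has $M^{2}=pI$, hence $v(M^{2})=0<1=v(pM)$, so the claimed bound ``$v(M^{k})-v(pM)\ge 0$'' already fails at $k=2$.

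The fix is standard (and is exactly what Lazard does): use the \emph{weighted} valuation
\[
\tilde v(X)\;=\;\min_{i,j}\Bigl(v_p(x_{ij})+\tfrac{j-i}{2}\Bigr)
\]
on $M_2(\mathbb{Q}_p)$, which is submultiplicative for the matrix product and hence gives $\tilde v([X,Y])\ge\tilde v(X)+\tilde v(Y)$ with no shift needed. On $\mathfrak{g}\setminus\{0\}$ one has $\tilde v\ge 1/2$, so axiom (i) holds precisely when $1/2>1/(p-1)$, i.e.\ $p>3$; this (together with convergence of $\exp$ on $\{\tilde v>1/(p-1)\}$) is where the hypothesis $p>3$ actually enters, not in axiom (iv) as you suggest. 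With $\omega(g):=\tilde v(\log g)$ the remaining axioms, $p$-saturation, and your index computation for the Sylow property go through as you outlined.
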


Applying the arguments of \cite[Section 2]{Clozel2} to ${\rm GL}_2(\mathbb{Z}_p)$, we see that 
\begin{align*}
x=\left [
\begin{array}{cc}
1 & 1\\
0 & 1
\end{array}\right ], \quad
h=\left [
\begin{array}{cc}
1+p& 0\\
0 & (1+p)^{-1}
\end{array}\right ], \quad
y=\left [
\begin{array}{cc}
1 & 0\\
p & 1
\end{array}\right ]
\end{align*}
form a topological generating set for the pro-$p$ Iwahori subgroup $H$ of
 ${\rm SL}_2(\mathbb{Z}_p) $ and that \begin{align*}
& x=\left [
\begin{array}{cc}
1 & 1\\
0 & 1
\end{array}\right ], \quad
 h=\left [
\begin{array}{cc}
1+p& 0\\
0 & (1+p)^{-1}
\end{array}\right ], \quad\\&
z=\left [
\begin{array}{cc}
1+p& 0\\
0 & 1+p
\end{array}\right ], \quad
y=\left [
\begin{array}{cc}
1 & 0\\
p & 1
\end{array}\right ]
\end{align*}
construct a topological generating set for the pro-$p$ Iwahori subgroup $G$ of
 ${\rm GL}_2(\mathbb{Z}_p) $. When certain complicated computations are needed, 
 the type and the number of topological generators will be useful.  Let us set
 $$
 {\bf x}=x-1,\,  {\bf h}=h-1, \, {\bf z}=z-1,\,  {\bf y}=y-1.
 $$
Then ${\bf x},\,  {\bf h},\, {\bf y}\in
{\mathbb{F}}_p[H]\subseteq \Omega_H$ and ${\bf x},\,  {\bf h}, \, {\bf z},\, {\bf y}\in
{\mathbb{F}}_p[G]\subseteq \Omega_G$. Thus we can produce various
monomials in the ${\bf x},\, {\bf h},\,  {\bf z}, \, {\bf y}$: if
$\alpha=(i, j, k)$ is a 3-tuple of nonnegative integers, we define
$$
{\rm {\bf X}}^\alpha={\bf x}^i {\bf h}^j {\bf y}^k\in
\Omega_H
$$
If  $\beta=(i, j, k, l)$ is a 4-tuple of nonnegative integers, 
one can define
$$
{\rm {\bf Y}}^\beta={\bf x}^i {\bf h}^j {\bf z}^k{\bf y}^l\in
\Omega_G
$$

It should be remarked that the expressions of these monomials depend
on our choice of ordering of the ${\bf x}$'s,
${\bf h}$'s, ${\bf z}$'s, ${\bf y}$'s, because $\Omega_H$ and $\Omega_G$ are 
noncommutative unless $H$ and $G$ are abelian. The following result shows
that $\Omega_H$ and $\Omega_G$ are both ``noncommutative formal power series rings".

\begin{proposition}\label{xxsec2.5}{\rm (cf. \cite[Chapter VI, Section 28]{Schneider})}
Every element $a$ of the mod-$p$ Iwasawa algebra $\Omega_H$ can be 
written as the sum of a uniquely determined convergent series
$$
a=\sum_{\alpha \in {\mathbb{N}}^3} a_\alpha {\rm
{\bf{X}}}^\alpha,
$$
where $a_\alpha\in {\mathbb{F}}_p$ for all $\alpha\in
{\mathbb{N}}^3$; each element $b$ of the mod-$p$ Iwasawa algebra $\Omega_G$ is equal
to the sum of a uniquely determined convergent series
$$
b=\sum_{\beta \in {\mathbb{N}}^4} b_\beta {\rm
{\bf{Y}}}^\beta, 
$$
where $b_\beta\in {\mathbb{F}}_p$ for all $\beta\in
{\mathbb{N}}^4$.
\end{proposition}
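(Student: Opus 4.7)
The plan is to deduce Proposition \ref{xxsec2.5} from Lazard's general machinery on ordered bases, in exactly the same manner that was used in Section \ref{xxsec2} to identify $\Lambda_{G(1)}$ with a formal power series ring in the elements $y_\alpha = x_\alpha(p)-1$ and $y_\delta = h_\delta(1+p)-1$. The two hypotheses required for Lazard's theorem, namely that $H$ (respectively $G$) is a $p$-valued $p$-saturated pro-$p$ group and that the chosen topological generators form an ordered basis with respect to the $p$-valuation, are essentially provided by Proposition \ref{xxsec7.3} and the mild-ramification condition $ne<p-1$.

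First I would set up the $p$-valuation $\omega$ on $H$ and on $G$ coming from the filtration by congruence subgroups in $\mathrm{GL}_2(\mathbb{Z}_p)$, and verify that under our assumption $p>3$ (which ensures $ne<p-1$ since here $e=1$, $n=2$), the valuation is $p$-saturated, invoking Proposition \ref{xxsec7.3} directly. The next step is to compute the valuations of the proposed generators: one checks that $\omega(x)=\omega(y)=\omega(h)=1$ for $H$, and that $\omega(x)=\omega(y)=\omega(h)=\omega(z)=1$ for $G$, so they lie in the first filtration layer but not in the second. One then verifies that their images in the graded Lie algebra $\mathrm{gr}\,H$ (respectively $\mathrm{gr}\,G$) form a basis of this free $\mathbb{F}_p[\pi]$-module of the correct rank, which forces them to be an ordered basis in the sense of \cite[III, 2.2.4]{Lazard}.

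Once the ordered-basis property is established, Lazard's theorem (see e.g.\ \cite[III, 2.3.3]{Lazard} or the exposition in \cite{Schneider}) gives a homeomorphism
\begin{equation*}
c \colon \mathbb{Z}_p^3 \xrightarrow{\;\sim\;} H, \qquad (a_1,a_2,a_3) \longmapsto x^{a_1}h^{a_2}y^{a_3},
\end{equation*}
and analogously $\mathbb{Z}_p^4 \xrightarrow{\sim} G$. Dualising the induced isomorphism on rings of continuous $\mathbb{Z}_p$-valued functions, and reducing modulo $p$, one obtains the claimed identification of $\Omega_H$ with $\mathbb{F}_p\llbracket \mathbf{x},\mathbf{h},\mathbf{y}\rrbracket$ and of $\Omega_G$ with $\mathbb{F}_p\llbracket \mathbf{x},\mathbf{h},\mathbf{z},\mathbf{y}\rrbracket$, where $\mathbf{x}=x-1$ etc. Uniqueness and convergence of the expansion $\sum a_\alpha \mathbf{X}^\alpha$ are then simply a rephrasing of the fact that the monomials $\mathbf{X}^\alpha$ (respectively $\mathbf{Y}^\beta$) form a topological $\mathbb{F}_p$-basis of the completed group algebra.

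The main (and essentially only) obstacle is the verification that the listed generators genuinely form a Lazard ordered basis under the given $p$-valuation; once this is in place the rest is a formal application of Lazard's theorem. For $H$ this is already in Clozel \cite{Clozel1}, and for $G$ the computation is a mild extension obtained by adjoining the central generator $z$ from the $\mathrm{GL}_1$-factor, whose valuation and commutation properties with $x,h,y$ are transparent because $z$ is central in $\mathrm{GL}_2(\mathbb{Z}_p)$. No new commutator computations beyond those already carried out in Section \ref{xxsec3} are needed.
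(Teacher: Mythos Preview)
The paper does not give its own proof of this proposition: it is merely stated, with the parenthetical reference to \cite[Chapter VI, Section 28]{Schneider}, as a known fact inside the ``Future questions'' section. Your outline via Lazard's ordered-basis machinery and the duality $C(G)\simeq C(\mathbb{Z}_p^d)$ is precisely the standard argument recorded in Schneider's book, so in that sense there is nothing to compare --- your route is the route.

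One correction is in order, however. Your claim that $\omega(x)=\omega(y)=\omega(h)=\omega(z)=1$ cannot hold for the pro-$p$ Iwahori with any Lazard $p$-valuation. A direct computation shows that the commutator $[x,y]=xyx^{-1}y^{-1}$ lies in $\Gamma_1(\mathrm{GL}_2(\mathbb{Z}_p))\setminus\Gamma_2(\mathrm{GL}_2(\mathbb{Z}_p))$, so if $\omega$ were the congruence-filtration valuation one would have $\omega([x,y])=1$, violating the axiom $\omega([x,y])\ge\omega(x)+\omega(y)$. The $p$-valuation produced by \cite[III, 3.2.7]{Lazard} (and invoked in Proposition \ref{xxsec7.3}) instead takes non-integral values on the Iwahori, typically in $\tfrac{1}{2}\mathbb{Z}$ for $\mathrm{GL}_2$, reflecting exactly the fact that the pro-$p$ Iwahori is $p$-saturated but not uniform. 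This does not affect the validity of your argument --- Lazard's theorem only requires an ordered basis, not equal valuations --- but you should revise the stated values of $\omega$ and the sentence ``so they lie in the first filtration layer but not in the second'' accordingly.
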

One can analogously define the pro-$p$ Iwahori of subgroup of ${\rm SL}_n(\mathbb{Z}_p) $ and ${\rm GL}_n(\mathbb{Z}_p) $ which are $p$ saturated if $p>n+1$.

We should distinguish 
the pro-$p$ Iwahori subgroups of ${\rm GL}_n(\mathbb{Z}_p)$ and ${\rm SL}_n(\mathbb{Z}_p)$ 
from their uniform pro-$p$ subgroups, such as their first congruence subgroups.  
Note that the first congruence subgroup 
$\Gamma_1({\rm GL}_n(\mathbb{Z}_p))$ (resp. $\Gamma_1({\rm SL}_n(\mathbb{Z}_p))$) 
of ${\rm GL}_n(\mathbb{Z}_p)$ (resp. ${\rm SL}_n(\mathbb{Z}_p)$) is a uniform pro-$p$ subgroup. 
We should be aware of the fact that an arbitrary 
uniform pro-$p$ group is $p$-valued and $p$-saturated. Although the pro-$p$ Iwahori 
subgroup of ${\rm GL}_n(\mathbb{Z}_p)$ (resp. the pro-$p$ Iwahori subgroup of  ${\rm SL}_n(\mathbb{Z}_p)$) is 
$p$-valued and $p$-saturated as well, it is not in general uniform and does contain the first congruence subgroup 
$\Gamma_1({\rm GL}_n(\mathbb{Z}_p))$ (resp. $\Gamma_1({\rm SL}_n(\mathbb{Z}_p))$) properly. 
It is easily seen that uniform pro-$p$ groups form a subclass of the class of $p$-saturated groups. 
(cf. Remark after \cite[Lemma 4.3]{SchneiderTeitelbaum}). 
Klopsch \cite{Klopsch} and Schneider 
\cite{Schneider} illustrated by examples that the standard notion of uniform pro-$p$ groups 
is more restrictive and less flexible than Lazard's concept of $p$-saturated groups. Klopsch \cite[Proposition 2.4]{Klopsch} also pointed out that the Sylow pro-$p$ subgroups of many classical groups are $p$-saturated, 
but typically fail to be uniform powerful.  Consequently, the Iwasawa algebras 
of the pro-$p$ Iwahori subgroups are much more larger than those of  the first congruence subgroups. 
We therefore say that the Iwasawa algebras over the first congruence subgroups can be looked on as 
subalgebras of the Iwasawa algebras over the pro-$p$ Iwahori subgroups.  

In our future work we hope to generalize our methods  to more general  $p$-saturated groups like the pro-$p$ Iwahori (which is a non-uniform group) of ${\rm GL}_n(\mathbb{Z}_p)$ and ${\rm SL}_n(\mathbb{Z}_p)$ in order to determine the normal elements in their mod-$p$ Iwasawa algebras.  We now need to look at the lowest degree commutators from Ray's article \cite{Ray3} and construct the partial differential equations similar to section \ref{xxsec4}.

\vspace{4mm}

\noindent {\bf Acknowledgements} We have accumulated quite a debt of gratitude in writing this paper. 
Most of all to Professor Konstantin Ardakov and Professor Simon Wadsley for many invaluable suggestions 
and discussions and for constant inspiration. The second author would like to thank PIMS-CNRS and the 
University of British Columbia for postdoctoral research grant. He is also thankful to Beijing Institute of Technology 
for its gracious hospitality during a visit on August 2018 when this collaboration took place.

\vspace{8mm}


\begin{thebibliography}{}

\bibitem[1]{Ardakov}
K. Ardakov,
{\em The centre of completed group algebras of pro-$p$ groups},
Documenta Math., \textbf{9} (2004), 599-606.







\bibitem[2]{ArdakovWeiZhang1}
K. Ardakov, F. Wei and J. J. Zhang,  
{\em Reflexive ideals in Iwasawa algebras}, Adv. Math., \textbf{218} (2008), 
865-901.


\bibitem[3]{ArdakovWeiZhang2}
K. Ardakov, F. Wei and J. J. Zhang,  
{\em Nonexistence of reflexive ideals in Iwasawa algebras of Chevalley type}, J. Algebra, \textbf{320(1)} (2008), 
259-275.



\bibitem[4]{Bourbaki}
N. Bourbaki, {\em Lie groups and Lie algebras. Chapters 4-6}, Translated from the 1968 French original by 
Andrew Pressley, Elements of Mathematics (Berlin). Springer-Verlag, Berlin, 2002. xii+300.







\bibitem[5]{BushnellHenniart} 
C. J. Bushnell and G. Henniart, 
{\em The local Langlands Conjecture for ${\rm GL}(2)$}, 
Grundlehren der Mathematischen Wissenschaften [Fundamental Principles of Mathematical Sciences], \textbf{335}. 
Springer-Verlag, Berlin, 2006. xii+347 pp. 




\bibitem[6]{Clozel1}
L. Clozel,
{\em Presentation of an Iwasawa algebra: the case of $\Gamma_1 {\rm SL}(2, \mathbb{Z}_p)$},
Documenta Math., \textbf{16} (2011), 545-559.




\bibitem[7]{Clozel2}
L. Clozel,
{\em Globally analytic $p$-adic representations of the pro-$p$-Iwahori subgroup of ${\rm GL}(2)$ and base change, I:
Iwasawa algebras and a base change map}, Bull. Iranian Math. Soc., \textbf{43} (2017), 55-76.




\bibitem[8]{Clozel3}
L. Clozel,
{\em Globally analytic $p$-adic representations of the pro-$p$-Iwahori subgroup of ${\rm GL}(2)$
and base change, II: a Steinberg tensor product theorem}, Cohomology of Arithmetic Groups, 1-33,
Springer Proc. Math. Stat., \textbf{245}, Springer, Cham, 2018.




\bibitem[9]{CFKSV}
J. Coates, T. Fukaya, K. Kato, R. Sujatha and O. Venjakob
{\em The ${\rm GL}_2$ main conjecture for elliptic curves without complex multiplication},
Publ. Math. Inst. Hautes \'Etudes Sci., \textbf{101} (2005), 163-208.




\bibitem[10]{CoatesSchneiderSujatha}
J. Coates, P. Schneider and R. Sujatha, 
{\em Modules over Iwasawa algebras}, J. Inst. Math. Jussieu, \textbf{2} (2003), 73-108.  




\bibitem[11]{DixonduSautoyMannSegal}
J. D. Dixon, M. du Sautoy, A. Mann and D. Segal,
{\em Analytic Pro-$p$ Groups}, 2nd edition, Paperback, Cambridge
University Press, (2003).




\bibitem[12]{GillePolo}
P. Gille and P. Polo,
{\em Sch\'emas en groupes (SGA 3). Tome III. Structure des sch\'e mas en groupes r\'e ductifs}, 
Documents Math\'ematiques (Paris), Vol. \textbf{8}. Soci\'et\'e Math\'ematique de France, Paris, 2011. 
S\'eminaire de G\'eom\'etrie  Alg\'ebrique du Bois Marie 1962-64.  A seminar directed by 
M. Demazure and A. Grothendieck with the collaboration of M. Artin, J.-E. Bertin, P. Gabriel, M. Raynaud and 
J-P. Serre. Revised and annotated edition of the 1970 French original.  













\bibitem[13]{HanWei1}
D. Han and F. Wei,
{\em Normal elements of noncommutative Iwasawa algebras over ${\rm SL}_3(\mathbb{Z}_p)$},
Forum. Math., \textbf{31} (2019), 111-147.



\bibitem[14]{HanWei2}
D. Han and F. Wei,
{\em Normal elements and center of the Iwasawa algebras over ${\rm SL}_n(\mathbb{Z}_p)$: a computational approach}, 
To appear in Forum. Math.




\bibitem[15]{Harris1}
M. Harris,
{\em  $p$-adic representations arising from descent on Abelian varieties}, Compos. Math.,
\textbf{39} (1979), 177-245.





\bibitem[16]{Harris2}
M. Harris,
{\em The annihilators of $p$-adic induced modules}, J. Algebra, \textbf{67} (1980),
68-71.





\bibitem[17]{Herzig}
F. Herzig,
{\em $p$-modular representations of $p$-adic groups},  Modular representation theory of finite and $p$-adic groups, 73-108, 
Lect. Notes Ser. Inst. Math. Sci. Natl. Univ. Singap., \textbf{30}, World Sci. Publ., Hackensack, NJ, 2015.





\bibitem[18]{Klopsch}
B. Klopsch,
{\em On the Lie theory of $p$-adic analytic groups}, Math. Z., \textbf{249} (2005), 713-730.





\bibitem[19]{Lazard}
M. Lazard,
{\em Groupes analytiques $p$-adiques}, Inst. Hautes \'Etudes Sci. Publ. Math., \textbf{26} (1965), 389-603.










\bibitem[20]{Ray1}
J. Ray,
{\em Presentation of the Iwasawa algebra of the first congruence kernel of a semi-simple,
simply connected Chevalley group over $\mathbb{Z}_p$}, J. Algebra, \textbf{511} (2018), 405-419.




\bibitem[21]{Ray2}
J. Ray,
{\em Explicit ring-theoretic presentation of Iwasawa algebras}, 
C. R. Math. Acad. Sci. Paris, \textbf{356} (2018), 1075-1080.




\bibitem[22]{Ray3}
J. Ray,
{\em Presentation of the Iwasawa algebra of the pro-$p$ Iwahori subgroup of ${\rm GL}_n(\mathbb{Z}_p)$},
\href{https://arxiv.org/abs/1707.06816v1}{http://arXiv:1707.06816v1 [math.NT]}.



\bibitem[23]{Ray4}
J. Ray,
{\em Iwasawa algebra of the $p$-adic Lie groups and Galois groups}, Ph.D Thesis, 
Universit\' e Paris-Sud 11, 2018.




\bibitem[24]{Schneider}
P. Schneider,
{\em $p$-adic Lie Groups}, Grundlehren der Mathematischen
Wissenschaften [Fundamental Principles of Mathematical Sciences],
\textbf{344}. Springer, Heidelberg, 2011.




\bibitem[25]{SchneiderTeitelbaum}
P. Schneider and J. Teitelbaum
{\em Algebras of $p$-adic distributions and admissible representations}, 
Invent. Math., \textbf{153} (2003), 145-196. 


\bibitem[26]{Serre}
J. P. Serre,
{\em Complex Semisimple Lie Algebras}, translated from the French by G. A. Jones,
Springer, New York, 1987.



\bibitem[27]{Steinberg}
R. Steinberg,
{\em Lectures on Chevalley Groups}, Mimeographed Notes, Yale University, New Haven,  1967.



\bibitem[28]{Venjakob1}
O. Venjakob,
{\em On the structure theory of the Iwasawa algebra of a $p$-adic Lie group},
J. Eur. Math. Soc. (JEMS), \textbf{4} (2002), 271-311.




\bibitem[29]{Venjakob2}
O. Venjakob,
{\em On the Iwasawa theory of $p$-adic Lie extensions},
Compositio Math., \textbf{138} (2003), 1-54.



\bibitem[30]{Venjakob3}
O. Venjakob,
{\em From classical to non-commutative Iwasawa theory: an introduction to the ${\rm GL}_2$ 
main conjecture}, European Congress of Mathematics, 861-879, Eur. Math. Soc., Z\" urich, 2005.


\bibitem[31]{WeiBian1}
F. Wei and D. Bian,
{\em Normal elements of completed group algebras over ${\rm SL}_n(\mathbb{Z}_p)$},  Int. J. Algebr. Comput.,
\textbf{20} (2010), 1021-1039.



\bibitem[32]{WeiBian2}
F. Wei and D. Bian,
{\em Erratum: Normal elements of completed group algebras over 
${\rm SL}_n(\mathbb{Z}_p)$},  Int. J. Algebr. Comput.,
\textbf{23} (2013), 215.






\end{thebibliography}
\end{document}